    \newcommand{\href}[2]{#2}
\theoremstyle{plain}
  \newtheorem{lemma}[equation]{Lemma}
  \newtheorem{proposition}[equation]{Proposition}
  \newtheorem{theorem}[equation]{Theorem}
  \newtheorem{corollary}[equation]{Corollary}
    \newtheorem{question}[equation]{Question}
    \newtheorem{statement}[equation]{Statement}
\theoremstyle{definition}
  \newtheorem{definition}[equation]{Definition}
\theoremstyle{remark}
  \newtheorem{remark}[equation]{Remark}
\renewcommand{\thesection}{\arabic{section}}
\renewcommand{\theequation}{\thesection.\arabic{equation}}
 \DeclareFontFamily{U}{manual}{}
 \DeclareFontShape{U}{manual}{m}{n}{ <->  manfnt }{}
 \newcommand{\manfntsymbol}[1]{%
    {\fontencoding{U}\fontfamily{manual}\selectfont\symbol{#1}}}
\endgroup\end{trivlist}}
 \newenvironment{example}[1][]{
   \refstepcounter{equation}
   \begin{proof}[Example~\theequation%
   \@ifnotempty{#1}{ (#1)}.]
   }
  {\end{proof}}
  \DeclareFontFamily{OT1}{pzc}{}
  \DeclareFontShape{OT1}{pzc}{m}{it}{<-> s * [1.100] pzcmi7t}{}
  \DeclareMathAlphabet{\mathpzc}{OT1}{pzc}{m}{it}
\newif\ifhascomments \hascommentstrue
  \newcommand{\jenna}[1]{{\color{red}[[\ensuremath{\bigstar\bigstar\bigstar} #1]]}}
  \newcommand{\matt}[1]{{\color{red}[[\ensuremath{\spadesuit\spadesuit\spadesuit} #1]]}}
  \newcommand{\jenna}[1]{}
  \newcommand{\matt}[1]{}
\renewcommand{\>}{\rangle} 
\renewcommand{\AA}{\mathbb{A}}
\DeclareMathOperator{\Aut}{\ensuremath{\mathcal{A}\kern-.125em\mathpzc{ut}}}
\DeclareMathOperator{\ext}{Ext}
\DeclareMathOperator{\Endo}{\ensuremath{\mathcal{E}\kern-.125em\mathpzc{nd}}}
\let\hom\relax
\DeclareMathOperator{\hom}{Hom}
\DeclareMathOperator{\Hom}{\ensuremath{\mathcal{H}\kern-.125em\mathpzc{om}}}
\newcommand{\m}{\mathfrak m}
\renewcommand{\setminus}{\smallsetminus}
 \def\ari[#1]{\ar@{^(->}[#1]}
 \def\are[#1]{\ar[#1]^{\txt{\'et}}}
 \def\areh[#1]{\ar[#1]|{\txt{$H$-eq}}^{\txt{\'et}}}
 \def\ars[#1]{\ar@{->>}[#1]}
 \newcommand{\dplus}{\ar@{}[d]|{\mbox{$\oplus$}}}
 \newcommand{\dtimes}{\ar@{}[d]|{\mbox{$\times$}}}
\theoremstyle{plain}
\newtheoremstyle{named}{}{}{\itshape}{}{\bfseries}{.}{.5em}{\thmnote{#3 }#1}
\theoremstyle{named}
\newtheorem*{namedtheorem}{Theorem}
\DeclareMathOperator{\ann}{Ann}
\DeclareMathOperator{\coker}{coker}
\DeclareMathOperator{\MM}{M}
\DeclareMathOperator{\soc}{Soc}
\DeclareMathOperator{\supp}{Supp}
\newcommand{\p}{{\mathfrak p}}
\newcommand{\q}{{\mathfrak q}}
\begin{document}
\title{New classes of examples satisfying the three matrix analog of Gerstenhaber's theorem}
\author{Jenna Rajchgot}
\thanks{J.R. is partially supported by NSERC grant RGPIN-2017-05732.}
\author{Matthew Satriano}
\thanks{M.S. is partially supported by NSERC grant RGPIN-2015-05631.}
\date{}


\begin{abstract}
In 1961, Gerstenhaber proved the following theorem: if $k$ is a field and $X$ and $Y$ are commuting $d\times d$ matrices with entries in $k$, then the unital $k$-algebra generated by these matrices has dimension at most $d$. The analog of this statement for four or more commuting matrices is false. The three matrix version remains open. 
We use commutative-algebraic techniques to prove that the three matrix analog of Gerstenhaber's theorem is true for some new classes of examples. 

In particular, we translate this three commuting matrix statement into an equivalent statement about certain maps between modules, and prove that this commutative-algebraic reformulation is true in special cases. We end with ideas for an inductive approach intended to handle the three matrix analog of Gerstenhaber's theorem more generally. 

\end{abstract}

\maketitle

\setcounter{tocdepth}{1}
\tableofcontents


\section{Introduction and statement of results}
\subsection{An overview of Gerstenhaber's theorem and its three matrix analog}
\label{subsec:overview}

Let $k$ be a field and let $\MM_d(k)$ denote the space of $d\times d$ matrices with entries in $k$. 

\begin{question}\label{qn:motivating}
Let $X_1,\dots, X_n\in \MM_d(k)$ be pairwise commuting matrices. Must the unital $k$-algebra that they generate be a finite-dimensional vector space of dimension at most $d$? 
\end{question}

When $n=1$, the answer to Question \ref{qn:motivating} is ``yes'' by the Cayley-Hamilton theorem: $X\in \MM_d(k)$ satisfies its characteristic polynomial, thus $I, X, X^2,\dots, X^{d-1}$ is a vector space spanning set for the algebra generated by $X$. 
 
 When $n\geq 4$, the answer is ``no'' in general. 
The standard $n=4$ counter-example is given by the matrices $E_{13},E_{14},E_{23},E_{24}\in\MM_4(k)$, where $E_{ij}$ denotes the matrix with a $1$ in position $(i,j)$ and $0$s elsewhere. 
These 4 matrices generate a 5-dimensional algebra. 
 
The first interesting case is $n=2$. Here the answer to Question \ref{qn:motivating} is ``yes.'' This result is often called \textbf{Gerstenhaber's theorem} and was proved in \cite{Gerstenhaber}. 
Gerstenhaber's proof was algebro-geometric, and relied on the irreducibility of the commuting scheme $\mathcal{C}(2,d)$ of pairs of $d\times d$ commuting matrices (a fact also proved in the earlier paper \cite{MTT}). 
Some years later, linear algebraic proofs (see \cite{BH, LL}) and commutative algebraic proofs (see \cite{Wadsworth, Bergman}) of Gerstenhaber's theorem were found.  More detailed  summaries on the history and approaches to Gerstenhaber's theorem can be found in \cite{SethurSurvey, HolbrookOmeara}.

The case $n=3$ is still open, and is the subject of this paper. We refer to the following statement as the \textbf{three matrix analog of Gerstenhaber's theorem}.
\begin{statement}
\label{conj:triple-mat}
If $X,Y, Z\in\MM_d(k)$ are matrices which pairwise commute, then the unital $k$-algebra generated by $X$, $Y$, and $Z$ is a finite-dimensional vector space of dimension at most $d$.
\end{statement}
To prove Statement \ref{conj:triple-mat}, one might try to mimic the algebro-geometric proof of Gerstenhaber's theorem. 
This approach succeeds whenever the affine scheme of triples of commuting $d\times d$ matrices, denoted $\mathcal{C}(3,d)$,  
is irreducible. Consequently, Statement \ref{conj:triple-mat} is true when $d\leq 10$ and $k$ is of characteristic $0$ (see \cite{Sivic3} and references therein). However, since $\mathcal{C}(3,d)$ has multiple irreducible components when $d\geq 30$ \cite{Guralnick, HolOmla}, a different approach is necessary to handle the general case. 
%

\subsection{Summary of the main results}\label{sect:results}
In this paper, we use commutative-algebraic methods to study the three matrix analog of Gerstenhaber's theorem. 
We do so by reformulating Statement \ref{conj:triple-mat} in terms of morphisms of modules, our key technical tools being Propositions \ref{prop:conj-reformulation} and \ref{prop:conjecture inductive approach}. 
Although this is nothing more than a simple reformulation, an approach along these lines appears to be new.

We work over an arbitrary field $k$ and let $S = k[x_1,\dots, x_n]$ denote a polynomial ring in $n$ variables. 
One can easily rephrase Question \ref{qn:motivating} on $n$ commuting matrices in terms of $S$-modules. We provide a proof of this in 
Section \ref{sec:module-approach} (see also \cite[\S5]{Bergman}).

\begin{proposition}
\label{prop:S/ann<=dimIntro}
Question \ref{qn:motivating} has answer ``yes''  if and only if for all $S$-modules $N$ which are finite-dimensional over $k$, we have 
\begin{equation}\label{eqn:dimM}
\dim S/\ann(N)\leq\dim N.
\end{equation}
Thus, Statement \ref{conj:triple-mat} is true if and only if inequality \eqref{eqn:dimM} holds for all finite-dimensional $k[x,y,z]$-modules $N$.
\end{proposition}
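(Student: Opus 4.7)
The plan is a routine translation: finite-dimensional $S$-modules are exactly finite-dimensional $k$-vector spaces equipped with $n$ pairwise commuting endomorphisms, and under this correspondence the unital $k$-subalgebra generated by the endomorphisms is identified with $S/\ann(N)$. Both directions of the equivalence then follow from matching dimensions.

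For the forward implication, I would start with an arbitrary finite-dimensional $S$-module $N$, set $d = \dim_k N$, and pick any $k$-basis of $N$. The actions of $x_1,\ldots,x_n$ are then pairwise commuting matrices $X_1,\ldots,X_n \in \MM_d(k)$, and the $k$-algebra structure map $\varphi\colon S \to \End_k(N) \cong \MM_d(k)$ has image equal to $k[X_1,\ldots,X_n]$ with kernel $\ann(N)$. By the first isomorphism theorem, $S/\ann(N) \cong k[X_1,\ldots,X_n]$ as $k$-algebras, so an affirmative answer to Question \ref{qn:motivating} immediately yields $\dim_k S/\ann(N) \leq d = \dim_k N$.

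For the reverse direction, given pairwise commuting $X_1,\ldots,X_n \in \MM_d(k)$, I would equip $k^d$ with the $S$-module structure in which $x_i$ acts as $X_i$; commutativity of the $X_i$ is exactly what makes this action well-defined on the polynomial ring. Running the same identification in reverse, inequality \eqref{eqn:dimM} applied to this particular $N$ gives $\dim_k k[X_1,\ldots,X_n] \leq d$, which is the conclusion of Question \ref{qn:motivating}. The second assertion of the proposition is then the special case $n=3$, i.e.\ $S=k[x,y,z]$, since Statement \ref{conj:triple-mat} is precisely the $n=3$ case of Question \ref{qn:motivating}.

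There is no genuine obstacle here: the content of the proposition is simply the observation that the map $S\to \End_k(N)$ has image $S/\ann(N)$, and so the result just records the standard dictionary between problems about tuples of commuting matrices and problems about finite-length modules over polynomial rings. The only care required is to note that the construction is basis-free in its input (any choice of $k$-basis of $N$ produces a valid tuple of commuting matrices) and works over an arbitrary field $k$.
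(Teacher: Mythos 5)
Your proof is correct and follows essentially the same route as the paper's: both establish the dictionary between finite-dimensional $S$-modules and tuples of commuting matrices, observe that the structure map $S\to\End_k(N)$ has image the generated algebra and kernel $\ann(N)$, and conclude by matching dimensions.
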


From the perspective of Proposition \ref{prop:S/ann<=dimIntro}, one can approach Statement \ref{conj:triple-mat} by successively considering modules of increasing complexity. The simplest modules to consider are cyclic ones: when $N=S/I$ it is obvious that equation (\ref{eqn:dimM}) holds. The next simplest case to consider is extensions
\[
0\to S/I\to N\to S/\m\to 0,
\]
where $\m\subseteq S$ is a maximal ideal. This case is much less obvious, 
and is the central focus of this paper. Our main result is that Statement \ref{conj:triple-mat} holds for such modules: 

\begin{theorem}
\label{thm:summaryTheorem}
Let $k$ be an infinite field, $S=k[x,y,z]$, $I\subseteq S$ an ideal of finite colength, and $\m\subseteq S$ a maximal ideal. If $N$ is an extension of $S$-modules
\[
0\to S/I\to N\to S/\m\to 0,
\]
then the inequality (\ref{eqn:dimM}) holds for $N$.
\end{theorem}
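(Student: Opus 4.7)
The plan is to express $\ann(N)$ explicitly in terms of the extension data and then bound its codimension in $S$.

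First, I would parametrize the extension by an $S$-linear map $\phi\colon\m\to S/I$. Pick any lift $e\in N$ of a generator of $S/\m$, and set $\phi(f)=f\cdot e\in S/I$ for $f\in\m$; different choices of $e$ change $\phi$ by a coboundary. A direct calculation then shows $\ann(N)=\ann(e)\cap I$. If $I\not\subseteq\m$, an element $u\in I\setminus\m$ produces $v:=ue\in N$ lifting a generator of $S/\m$ with $\m\cdot v=0$, hence a splitting, and the inequality holds with equality. So assume $I\subseteq\m$, in which case $\ann(e)=\ker\phi\subseteq\m$ and $\ann(N)=I\cap\ker\phi$. The exact sequence $0\to I/\ann(N)\to S/\ann(N)\to S/I\to 0$, combined with the isomorphism $I/\ann(N)\cong\phi(I)$ induced by $\phi$, then gives
\[
\dim_k S/\ann(N) \;=\; \dim_k S/I \;+\; \dim_k\phi(I),
\]
so the theorem reduces to the bound $\dim_k\phi(I)\le L:=\dim_k S/\m$.

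Second, I would decompose $S/I$ as a product according to the maximal ideals in $V(I)$. Because $\ext^1_S(S/\m,M)$ vanishes whenever $M$ is supported away from $\m$ (disjoint-support argument via localization), only the $\m$-primary component of $I$ carries a nontrivial extension, so one may assume $I$ is $\m$-primary. Writing $k'=S/\m$, a key observation is that $\phi(I)\subseteq\soc(S/I)$: for $g\in\m$ and $f\in I$, the $S$-linearity of $\phi$ yields $g\cdot\phi(f)=\phi(gf)=f\cdot\phi(g)=0$, the final equality because $f\in I$ annihilates $S/I$. In particular $\m I\subseteq\ann(N)$, so $\phi|_I$ descends to a well-defined $k'$-linear map $\bar\phi\colon I/\m I\to\soc(S/I)$. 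Since $\dim_k\phi(I)=L\cdot\dim_{k'}\phi(I)$, the goal is now the single claim
\[
\dim_{k'}\phi(I) \;\le\; 1,
\]
i.e., $\phi(I)$ is contained in a single $k'$-line in the socle.

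The main obstacle is this rank-one statement, equivalently that $\bar\phi$ has rank at most one. I would approach it by using the Koszul cocycle description of $\phi$: passing to the three-dimensional regular local ring $S_\m$ and choosing a regular system of parameters $m_1,m_2,m_3$ for $\m S_\m$, $\phi$ corresponds to a triple $(u_1,u_2,u_3)\in(S/I)^3$ satisfying the Koszul relations $m_ju_i=m_iu_j$ in $S/I$, well defined modulo coboundaries of the form $(m_is)_i$ for $s\in S/I$. The rank-one behavior is then to be extracted from Gorenstein duality for the Artinian quotient of the three-dimensional regular local ring $S_\m$; a suggestive numerical consistency check is that the Euler-characteristic identity $b_2=b_1+b_3-1$ on the Betti numbers of $S/I$ forces $\dim_{k'}\ext^1_S(S/\m,S/I)=b_2$ to coincide with the dimension of the rank-$\le 1$ locus in $\hom_{k'}(I/\m I,\soc(S/I))$. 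The infinite-field hypothesis would enter here via a genericity argument, e.g.\ a general choice of regular system of parameters or a base change to an infinite extension of the residue field $k'$. Carrying out this rank-one analysis is the crux of the proof.
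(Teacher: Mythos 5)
Your reductions are sound and essentially duplicate the paper's own preliminary steps: the extension is encoded by an $S$-linear $\phi\colon\m\to S/I$, one has $\ann(N)=I\cap\ker\phi$, the inequality collapses to bounding $\dim_k\phi(I)$, the image lies in $\soc(S/I)$, and one can localize to an $\m$-primary $I$. Up to notation ($\phi$ vs.~$\beta$, a direct-sum/localization lemma instead of a disjoint-support $\ext^1$ argument), this is the content of the paper's Proposition~\ref{prop:conj-reformulation-nvars}, Lemma~\ref{l:S/ann<=dim-direct-sums}, Corollary~\ref{cor:conj-reformulation-nvars-cyclic}, and Lemma~\ref{lem:genset}. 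So the reformulation as ``show $\dim_{k'}\phi(I)\le 1$'' is correct.

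The genuine gap is that this rank-one statement is the entire theorem, and your proposal does not prove it. You reformulate it (Koszul cocycle $(u_1,u_2,u_3)$ with $m_iu_j=m_ju_i$, a $k'$-linear map $\bar\phi\colon I/\m I\to\soc(S/I)$) and then offer a ``numerical consistency check,'' namely that $\dim_{k'}\ext^1(S/\m,S/I)=b_2=b_1+b_3-1$ equals the dimension of the affine cone over the Segre variety of rank-$\le1$ maps inside $\hom_{k'}(I/\m I,\soc(S/I))$. That coincidence is real (it follows from $1-b_1+b_2-b_3=0$ and local duality), but it is not evidence, let alone a proof: equality of dimensions does not place the image of the natural map $\ext^1(S/\m,S/I)\to\hom_{k'}(I/\m I,\soc(S/I))$ inside the rank-$\le1$ locus. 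Indeed, the analogous numerology fails to rescue the four-variable case, where the statement is false, so any correct argument must use three-variable-specific structure in a substantive way — not just a dimension count. The appeals to ``Gorenstein duality'' and a ``genericity argument'' are placeholders, and you acknowledge as much (``Carrying out this rank-one analysis is the crux of the proof'').

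For comparison, the paper's Sections~\ref{sec:reducing main theorem to special case} and~\ref{sec:main-thm} do precisely this work. They take a minimal counter-example $(\beta,S/I)$, observe that the $F_{ij}=x_if_j-x_jf_i$ lie in $I$, and reduce (Proposition~\ref{prop:dim1}, Corollary~\ref{cor:CM}, Proposition~\ref{prop:counterForm}) to $I\supseteq J=(F_{12},F_{13},F_{23},g)$ with $S/(F_{12},F_{13},F_{23})$ a codimension-two Cohen--Macaulay determinantal ring and $g$ a nonzerodivisor. The Eagon--Northcott complex then gives $\dim\soc(S_\m/J_\m)=2$ (Proposition~\ref{prop:dimSocSJ}). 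Theorem~\ref{thm:main} is finally proved by an induction on the nilpotency degree of $h$ combined with a divisibility property of maps into Gorenstein Artin quotients (Proposition~\ref{prop:Gor-divisibility-property}). Your instinct that Gorenstein-type structure, the determinantal/Koszul presentation, and genericity (the infinite-field hypothesis enters in Proposition~\ref{prop:counterForm}) are the relevant ingredients is correct, but those ingredients have to be assembled into an actual argument, and that is the part your proposal leaves open.
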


Using Theorem \ref{thm:summaryTheorem}, we also obtain the following more general result.

\begin{theorem}
\label{thm:summaryTheorem-more-general}
Let $k$ be an infinite field, $S=k[x,y,z]$, and $N$ an $S$-module which is finite dimension over $k$. Suppose $N=N_1\oplus\dots\oplus N_r$ where for each $i$, one of the following holds:
\begin{enumerate}
\item $N_i$ is a cyclic module, or 
\item $N_i$ is local Gorenstein, or
\item there is an extension
\[
0\to M_i\to N_i\to S/\m_i\to 0
\]
where $\m_i\subseteq S$ is a maximal ideal and 
$M_i=\bigoplus_\ell M_{i,\ell}$ with each $M_{i,\ell}$ a cyclic or local Gorenstein module.
\end{enumerate} 
Then the inequality (\ref{eqn:dimM}) holds for $N$.
\end{theorem}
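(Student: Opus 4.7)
The plan is to reduce the theorem to establishing~\eqref{eqn:dimM} for each direct summand $N_i$ individually via the identity $\ann(N) = \bigcap_i \ann(N_i)$, then to dispose of cases~(1) and~(2) by direct calculation, and finally to tackle case~(3) with a fiber product construction that invokes Theorem~\ref{thm:summaryTheorem}. For the reduction, the inclusion above induces an injection $S/\ann(N) \hookrightarrow \bigoplus_i S/\ann(N_i)$, giving $\dim_k S/\ann(N) \le \sum_i \dim_k S/\ann(N_i)$; combined with $\dim_k N = \sum_i \dim_k N_i$, it suffices to prove~\eqref{eqn:dimM} for each $N_i$. Case~(1) is immediate since $\ann(S/I) = I$. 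Case~(2) proceeds by Matlis duality: a local Gorenstein $N_i$ is (isomorphic to) the canonical module of the local Artinian ring $S/\ann(N_i)$, whence $\dim_k N_i = \dim_k S/\ann(N_i)$.

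For case~(3), the extension $0 \to M_i \to N_i \to S/\m_i \to 0$ is classified by an element of $\ext^1_S(S/\m_i, M_i) = \bigoplus_\ell \ext^1_S(S/\m_i, M_{i,\ell})$. Any $M_{i,\ell}$ not supported at $\m_i$ has $\ext^1_S(S/\m_i, M_{i,\ell}) = 0$ (an element of $\ann(M_{i,\ell}) \setminus \m_i$ annihilates one side of the Ext group while acting invertibly on the other), so such summands split off from $N_i$ and are handled by the preceding cases. We may therefore assume every $M_{i,\ell}$ is supported at $\m_i$. For each $\ell$, pushing the extension forward along the projection $M_i \twoheadrightarrow M_{i,\ell}$ yields $0 \to M_{i,\ell} \to N_{i,\ell}' \to S/\m_i \to 0$, and the induced map $N_i \to \bigoplus_\ell N_{i,\ell}'$ is an isomorphism onto the fiber product $N_{i,1}' \times_{S/\m_i} \cdots \times_{S/\m_i} N_{i,r_i}'$ (injectivity is immediate and the dimensions match). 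Consequently $\bigcap_\ell \ann(N_{i,\ell}') \subseteq \ann(N_i)$, and since each $S/\ann(N_{i,\ell}')$ surjects onto $S/\m_i$, the diagonal image of $S/\bigcap_\ell \ann(N_{i,\ell}')$ in $\bigoplus_\ell S/\ann(N_{i,\ell}')$ lies inside the corresponding ring-level fiber product, yielding
\[
\dim_k S/\ann(N_i) \;\le\; \sum_\ell \dim_k S/\ann(N_{i,\ell}') \;-\; (r_i - 1)\,\dim_k S/\m_i.
\]
When $M_{i,\ell}$ is cyclic, Theorem~\ref{thm:summaryTheorem} gives $\dim_k S/\ann(N_{i,\ell}') \le \dim_k M_{i,\ell} + \dim_k S/\m_i$; summing yields $\dim_k S/\ann(N_i) \le \dim_k M_i + \dim_k S/\m_i = \dim_k N_i$.

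The main obstacle is handling those summands $M_{i,\ell}$ that are local Gorenstein but not cyclic, since Theorem~\ref{thm:summaryTheorem} applies only to extensions with cyclic submodule. I would address this via Matlis duality over the local Artinian ring $S/\ann(N_{i,\ell}')$: the dual extension $0 \to S/\m_i \to (N_{i,\ell}')^\vee \to M_{i,\ell}^\vee \to 0$ has cyclic quotient $M_{i,\ell}^\vee \cong S/\ann(M_{i,\ell})$, and since Matlis duality preserves both $k$-dimension and annihilator for modules supported at $\m_i$, the bound for $N_{i,\ell}'$ reduces to a companion statement for extensions of a cyclic module by $S/\m_i$, provable by arguments parallel to Theorem~\ref{thm:summaryTheorem}. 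A secondary technical point is verifying the fiber product identification of $N_i$ and the resulting $(r_i - 1)\,\dim_k S/\m_i$ dimension saving, which is the numerical crux: without it the bound would be off by precisely this amount.
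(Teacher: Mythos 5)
Your proposal follows the same overall architecture as the paper: reduce to individual summands via $\ann(N)=\bigcap_i\ann(N_i)$, dispose of the cyclic and local Gorenstein summands directly, and for case (3) use a dimension-saving observation when $M_i$ is a direct sum of submodules all supported at $\m_i$. Your fiber-product identification of $N_i$ inside $\bigoplus_\ell N_{i,\ell}'$ is correct and yields precisely the $(r_i-1)\dim S/\m_i$ saving; this is in effect a repackaging of the paper's Proposition~\ref{prop:direct-sums}, whose proof extracts the same saving from $\dim S/\bigcap_j \ann(\widetilde M_j)=\sum_j \dim S/\ann(\widetilde M_j)-\dim S/\sum_j\ann(\widetilde M_j)$ together with $\sum_j\ann(\widetilde M_j)\subseteq\m$. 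Your Matlis duality argument for case (2) (a faithful local module with one-dimensional socle has Matlis dual isomorphic to the underlying Artinian ring, hence equal $k$-dimension) is also correct, and is an acceptable substitute for the paper's induction on composition series in Proposition~\ref{prop:Gorenstein and extensions}(\ref{item::Gorenstein inequality}).

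The genuine gap is in the Gorenstein-summand branch of case (3). You dualize $0\to M_{i,\ell}\to N_{i,\ell}'\to S/\m_i\to 0$ to $0\to S/\m_i\to (N_{i,\ell}')^\vee\to M_{i,\ell}^\vee\to 0$ (valid), note that $M_{i,\ell}^\vee$ is cyclic (valid), and then assert that the needed bound for extensions of a cyclic module \emph{by} $S/\m_i$ is ``provable by arguments parallel to Theorem~\ref{thm:summaryTheorem}.'' This is not a proof, and the parallel you gesture at does not hold: Theorem~\ref{thm:summaryTheorem} concerns the much harder situation where $S/\m$ is the \emph{quotient}, and its proof (reduction to determinantal ideals, Eagon--Northcott, the socle computation) has no analogue for the reversed extension. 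What saves your argument is that the reversed extension is in fact elementary: if $0\to S/\m\to N'\to S/J\to 0$ and $f\in N'$ lifts the generator of $S/J$, then $\ann_S(f)\subseteq J\subseteq\m$, so $\ann_S(N')=\ann_S(f)$ and $\dim S/\ann(N')=\dim Sf\le\dim N'$ (this is Proposition~\ref{prop:ann(m)=annM} of the paper). Your write-up leaves this step unjustified. The paper avoids the duality detour entirely: Proposition~\ref{prop:Gorenstein and extensions}(\ref{item::ext of Gorenstein inequality}) handles Gorenstein $M$ by the single observation that $\beta(\ann M)\subseteq\soc(M)$, which is one-dimensional, so $\dim S/\ann M+\dim\beta(\ann M)\le\dim M+1$ immediately. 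I would recommend either inserting the one-line argument above to discharge the ``companion statement,'' or (simpler) abandoning the duality and using the socle bound directly, as the paper does.
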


Before discussing the technique of proof, it is worth remarking why Theorems \ref{thm:summaryTheorem} and \ref{thm:summaryTheorem-more-general} are more difficult than the cyclic case $N=S/I$. One reason is that the cyclic case holds over polynomial rings $S=k[x_1,\dots,x_n]$ in arbitrarily many variables, whereas Theorem \ref{thm:summaryTheorem} is specific to 3 variables. 
Indeed, the technique of proof \emph{must} be specific to 3 variables since there are counter-examples of this form in 4 variables:

\begin{example}[{Theorems \ref{thm:summaryTheorem} and \ref{thm:summaryTheorem-more-general} are false for 4 variables}]
Let $S=k[x,y,z,w]$ and $\m=(x,y,z,w)$. Recall the standard 4 variable counter-example is given by the matrices $E_{13}$, $E_{14}$, $E_{23}$, $E_{24}\in\MM_4(k)$. Via the proof of Proposition \ref{prop:S/ann<=dimIntro}, this corresponds to the $S$-module $N$ given as follows: we have an extension
\[
0\to S/((x,y)+\m^2)\to N\stackrel{\pi}{\to} S/\m\to 0
\]
so that $N$ is generated by the elements of $M:=S/((x,y)+\m^2)$ and an additional element $f\in N$ such that $\pi(f)=1\in S/\m$. The $S$-module structure is given by 
\[
zf=wf=0,\quad xf=z\in M,\quad yf=w\in M.
\]
One checks $\ann(N)=\m^2$ so that
\[
\dim S/\ann(N)=5>4=\dim N
\]
which violates the inequality of Proposition \ref{prop:S/ann<=dimIntro}.
\end{example}

The proof of Theorem \ref{thm:summaryTheorem} consists of two steps. The first (see Theorem \ref{thm:main-special-case}) is showing that if a counter-example of this form exists, then we can reduce to the case where $I=(F_{12},F_{13},F_{23},g)$, the $F_{ij}$ are the maximal minors of a specific $2\times 3$ matrix, $g$ is a non-zero divisor, and $N$ satisfies certain additional properties. The second step (see \S\ref{sec:main-thm}) consists of showing that no such counter-example exists.

The case of Gorenstein modules, and extensions of $S/\m$ by Gorenstein modules, are handled in Section \ref{sec:pf of Gorenstein}. This, together with some preliminary results in Section \ref{sec:module-approach}, quickly reduces Theorem \ref{thm:summaryTheorem-more-general} to the case of Theorem \ref{thm:summaryTheorem}.

As mentioned above, the following is a key tool we use in the proofs of Theorems \ref{thm:summaryTheorem} and \ref{thm:summaryTheorem-more-general}.

\begin{proposition}
\label{prop:conj-reformulation}
Let $S=k[x,y,z]$ be a polynomial ring and $\m=(x,y,z)$. Then Statement \ref{conj:triple-mat} is true if and only if for all finite-dimensional $S$-modules $M$ with $\sqrt{\ann M}=\m$, and all $S$-module maps $\beta\colon\m\to M$, we have
\begin{equation}\label{eqn:betaM}
\dim S/\ann(M)+\dim\beta(\ann M)\leq \dim M+1.
\end{equation}
\end{proposition}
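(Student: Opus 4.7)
The plan is to establish a correspondence between pairs $(M,\beta)$, where $M$ is a finite-dimensional $S$-module with $\sqrt{\ann M}=\m$ and $\beta\colon\m\to M$ is an $S$-module map, and short exact sequences of $S$-modules $0\to M\to N\to S/\m\to 0$ with $N$ finite-dimensional. From $(M,\beta)$ I would construct $N:=M\oplus k\cdot e$ by declaring $e$ to lift $1\in S/\m$ and setting $s\cdot e:=\beta(s-s(0))+s(0)\,e$ for $s\in S$, where $s(0)$ denotes the constant term of $s$; $S$-linearity of $\beta$ yields associativity of this action, and the inclusion $M\hookrightarrow N$ together with the projection $N\twoheadrightarrow S/\m$ realizes the desired extension. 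Conversely, given such an extension, choosing a lift $e\in N$ of $1\in S/\m$ and setting $\beta(f):=f\cdot e$ (which lies in $M$ since $f$ maps to $0$ in $S/\m$) recovers a pair $(M,\beta)$.

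Next I would translate inequality (\ref{eqn:dimM}) for $N$ into inequality (\ref{eqn:betaM}) for $(M,\beta)$. The equality $\dim_k N=\dim_k M+1$ is clear. For the annihilator, I would write $s=c+f\in k\oplus\m$ and observe that $s\cdot e=\beta(f)+c\,e$ vanishes in $N$ exactly when $c=0$ and $\beta(f)=0$; combined with $s\in\ann M$, this yields $\ann N=\ann M\cap\ker\beta$. Applying the first isomorphism theorem to $\beta|_{\ann M}\colon\ann M\to M$ then gives $\ann M/\ann N\cong\beta(\ann M)$, hence the key identity
\[
\dim S/\ann N=\dim S/\ann M+\dim_k\beta(\ann M).
\]
The forward direction of the proposition is then immediate by applying Proposition \ref{prop:S/ann<=dimIntro} to $N$.

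For the reverse direction, I would decompose any finite-dimensional $S$-module $N$ into its primary components, reducing (\ref{eqn:dimM}) to the case when $N$ is supported at a single maximal ideal $\m'$. When $\m'=\m$, pick a surjection $N\twoheadrightarrow S/\m$ with kernel $M$; the case $M=0$ makes (\ref{eqn:dimM}) trivial, while otherwise $\sqrt{\ann M}=\m$, so the associated pair $(M,\beta)$ satisfies (\ref{eqn:betaM}) by hypothesis, and the dimension identity above converts this into (\ref{eqn:dimM}) for $N$. For $\m'\ne\m$, a linear change of variables reduces to the previous case when $\m'$ is $k$-rational, with base change to an algebraic closure handling the residual case. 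The forward construction and the subsequent bookkeeping are both straightforward; the only points requiring care are associativity of the $S$-action on $N$ and the precise identification $\ann N=\ann M\cap\ker\beta$, and I do not anticipate a serious obstacle.
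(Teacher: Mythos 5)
Your proof is correct and follows essentially the same route as the paper's: establish the bijection between maps $\beta\colon\m\to M$ and extensions $0\to M\to N\to S/\m\to 0$, derive the key identity $\dim S/\ann(N)=\dim S/\ann(M)+\dim\beta(\ann M)$ from $\ann(N)=\ann(M)\cap\ker\beta$, and combine with Proposition \ref{prop:S/ann<=dimIntro} together with the reduction to modules supported at a single point. The only cosmetic difference is that you build the extension $N=M\oplus k\cdot e$ explicitly and verify the $S$-action by hand, whereas the paper obtains the lift $\beta$ from the isomorphism $\ext^1(S/\m,M)\cong\hom(\m,M)/M$ supplied by the long exact sequence for $0\to\m\to S\to S/\m\to 0$; you are also slightly more explicit than the paper in flagging the non-$k$-rational support point, which both treatments otherwise handle by a quick translation.
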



From the perspective of Proposition \ref{prop:conj-reformulation}, Statement \ref{conj:triple-mat} is an assertion about bounding the dimension of the image of $\ann(M)$ under a module map. Along these lines, we obtain an inductive approach to Statement \ref{conj:triple-mat}:

\begin{proposition}
\label{prop:conjecture inductive approach}
Let $S=k[x,y,z]$ and $\m=(x,y,z)$. Then Statement \ref{conj:triple-mat} is equivalent to the following assertion: for all finite-dimensional $S$-modules $M$ with $\sqrt{\ann M}=\m$, all submodules $M'\subseteq M$ with $\dim(M/M')=1$, all ideals $J\subseteq\m$, and all $S$-module maps $\beta\colon J\to M'$, if
\[
\dim J/(J\cap\ann M')+\dim\beta(J\cap\ann M')\leq \dim M'
\]
then
\[
\dim J/(J\cap\ann M)+\dim\beta(J\cap\ann M)\leq \dim M.
\]
\end{proposition}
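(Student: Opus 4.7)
The plan is to prove the equivalence using Proposition~\ref{prop:conj-reformulation} as a bridge. That proposition recasts Statement~\ref{conj:triple-mat} as the inequality
\begin{equation*}
\dim(\m/\ann M) + \dim\beta(\ann M) \leq \dim M \tag{$\dagger$}
\end{equation*}
(obtained from the displayed form there by subtracting $\dim S/\m = 1$ from both sides, using $\ann M\subseteq\m$) for every finite-dimensional $(M,\beta\colon\m\to M)$ with $\sqrt{\ann M}=\m$. Inequality $(\dagger)$ is precisely the conclusion of the inductive assertion specialized to $J = \m$ (since $\m\cap\ann M = \ann M$), and the hypothesis of that specialization is $(\dagger)$ applied to $M'$. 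So the content of the inductive assertion when $J = \m$ is exactly ``$(\dagger)$ for $M'$ implies $(\dagger)$ for $M$,'' and the proof of the equivalence reduces to bootstrapping between this $J=\m$ case and the general-$J$ case.

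For the direction ``inductive assertion $\Rightarrow$ Statement~\ref{conj:triple-mat},'' I would induct on $\dim M$ to establish $(\dagger)$. The base case $\dim M\leq 1$ is immediate (either $M=0$, or $M\cong S/\m$ and $\beta(\m)\subseteq M$ forces $\dim\beta(\m)\leq 1$). For the inductive step I split on whether $\beta$ is surjective. If $\beta(\m)\subsetneq M$, pick any maximal proper submodule $M'\supseteq\beta(\m)$; then $M/M'\cong S/\m$ using $\sqrt{\ann M}=\m$, so $\dim M/M'=1$, and viewing $\beta$ as a map $\m\to M'$, the inductive hypothesis gives $(\dagger)$ for $(M',\beta)$. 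This is precisely the hypothesis of the inductive assertion with $J=\m$, whose conclusion is $(\dagger)$ for $(M,\beta)$. If instead $\beta(\m)=M$, then in the corresponding extension $0\to M\to N\to S/\m\to 0$, a lift $e$ of $1$ satisfies $Se\supseteq\beta(\m)=M$ and $Se\ni e$, so $N = Se$ is cyclic; hence $\dim S/\ann N = \dim N$, which via the identities $\dim S/\ann N = \dim S/\ann M + \dim\beta(\ann M)$ and $\dim N = \dim M+1$ (used in the proof of Proposition~\ref{prop:conj-reformulation}) gives $(\dagger)$ with equality. Proposition~\ref{prop:conj-reformulation} then yields Statement~\ref{conj:triple-mat}.

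For the converse direction, given $(M,M',J,\beta)$ satisfying the hypothesis, the strategy is to choose an $S$-linear extension $\tilde\beta\colon\m\to M$ of $\beta$ (with $\tilde\beta|_J$ equal to $\beta$ followed by $M'\hookrightarrow M$) and apply $(\dagger)$ to $(M,\tilde\beta)$, which is available via Statement~\ref{conj:triple-mat} and Proposition~\ref{prop:conj-reformulation}. Granted such an extension, $\tilde\beta(\ann M)\supseteq\tilde\beta(J\cap\ann M) = \beta(J\cap\ann M)$, and combining with the inclusion $J/(J\cap\ann M)\hookrightarrow\m/\ann M$ gives
\begin{equation*}
\dim J/(J\cap\ann M) + \dim\beta(J\cap\ann M) \;\leq\; \dim\m/\ann M + \dim\tilde\beta(\ann M) \;\leq\; \dim M,
\end{equation*}
which is the desired conclusion.

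The principal obstacle is constructing $\tilde\beta$: the obstruction to extending $\beta\colon J\to M'$ to $\m\to M'$ lies in $\ext^1_S(\m/J,M')$, which need not vanish. However, the long exact sequence of $\ext^\bullet_S(\m/J,-)$ attached to $0\to M'\to M\to S/\m\to 0$ shows that the obstruction to extending into the larger $M$ is the image of the $M'$-obstruction under $\ext^1_S(\m/J,M')\to\ext^1_S(\m/J,M)$, whose kernel is the image of the connecting map from $\hom_S(\m/J, S/\m)$; I expect the hypothesis inequality to be exactly what forces the $M'$-obstruction to land in this kernel. Should this direct extension fail, the fallback is to pass to an explicit pushout module such as $(M\oplus\m)/\{(\beta(j),-j):j\in J\}$, truncate by a cofinite ideal to render it finite-dimensional, and apply Proposition~\ref{prop:S/ann<=dimIntro} there; verifying that the dimension accounting survives this truncation is where I anticipate most of the technical effort.
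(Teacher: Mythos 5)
Your forward direction (inductive assertion $\Rightarrow$ Statement~\ref{conj:triple-mat}) is correct, and it actually takes a genuinely different route than the paper. The paper first proves a general-$J$ version of Proposition~\ref{prop:conj-reformulation} (their Lemma 6.2: Statement~\ref{conj:triple-mat} is equivalent to $\dim J/(J\cap\ann M)+\dim\beta(J\cap\ann M)\leq\dim M$ holding for all $\beta\colon J\to M$, not just $J=\m$), and then inducts on $\dim M$, splitting on whether the composite $J\to M\to S/\m$ is surjective; in the surjective case it changes $J$ to $\ker(\pi\beta)$ and identifies two extension modules via a map of short exact sequences. You stay entirely in the $J=\m$ world of Proposition~\ref{prop:conj-reformulation}, and your surjectivity dichotomy is about $\beta(\m)$ versus $M$ rather than about $\pi\beta$; the surjective case is dispatched by observing the corresponding extension $N$ is cyclic. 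This is a legitimate and arguably leaner argument: it shows that only the $J=\m$ specialization of the asserted implication is needed for this direction.

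The converse direction, however, has a genuine gap, and you have correctly diagnosed where it is: the extension $\tilde\beta\colon\m\to M$ of $\beta\colon J\to M'$ need not exist, since the obstruction lives in $\ext^1_S(\m/J,M)$, which does not vanish in general. Your hope that the hypothesis inequality forces the obstruction class into the right kernel is speculation with no argument behind it, and your fallback construction $(M\oplus\m)/\{(\beta(j),-j):j\in J\}$ has the free summand wrong: that pushout is an extension of $\m/J$, not of $S/J$, by $M$, and its annihilator is not simply $\ann M\cap\ker\beta$, so the "dimension accounting" does not come out cleanly. The key realization you are missing is that one should not lift $\beta$ to $\m$ at all. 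Take instead $N=(M\oplus S)/\{(-\beta(j),j):j\in J\}$, which fits into $0\to M\to N\to S/J\to 0$; a direct check gives $\ann N=\ann M\cap\ker\beta$ exactly, and then $\dim S/\ann N=\dim S/(J\cap\ann M)+\dim\beta(J\cap\ann M)$ and $\dim N=\dim M+\dim S/J$, so Proposition~\ref{prop:S/ann<=dimIntro} applied to $N$ gives $\dim J/(J\cap\ann M)+\dim\beta(J\cap\ann M)\leq\dim M$ unconditionally under Statement~\ref{conj:triple-mat}. Once this is in hand, the whole converse direction collapses to a triviality: both the hypothesis and the conclusion of the asserted implication are instances of this one inequality (for $M'$ and $M$ respectively), so the implication is vacuously true. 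No $\ext$-chasing, no truncation, and no appeal to the hypothesis inequality is needed. This is precisely what the paper's Lemma 6.2 packages, and the paper's proof of this direction is a single sentence.
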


We prove Proposition \ref{prop:conj-reformulation} in Section \ref{sec:module-approach}, and Proposition \ref{prop:conjecture inductive approach} in Section \ref{sec:inductive approach}.

\subsection*{Notation}
All rings in this paper are commutative with unit. We will let $\supp(M)$ and $\soc(M)$ denote the support, respectively the socle, of a module $M$. Unless otherwise specified, $\dim(M)$ and the word dimension will refer to the dimension of a module $M$ as a vector space over a given base field. 

\subsection*{Acknowledgments}
It is a pleasure to thank Jason Bell, Mel Hochster, and Steven Sam for many enlightening conversations. We thank the Casa Mathem\'atica Oaxaca for the wonderful accommodations where part of this work took place. We are especially grateful to Matt Kennedy for suggesting this problem. We performed many computations in Macaulay2 \cite{M2}, which inspired several ideas.

\section{Reformulating Statement \ref{conj:triple-mat} in terms of module morphisms}
\label{sec:module-approach}
Throughout this section, we fix a field $k$, let $S=k[x_1,\dots,x_n]$ and $\m=(x_1,\dots,x_n)$. Although our primary focus is the three matrix analogue of Gerstenhaber's theorem, our proofs work for arbitrary fields and arbitrarily many matrices. For convenience, consider the following:

\begin{statement}
\label{statement:n-matrix-conj}
The algebra generated by commuting matrices $X_1,X_2,\dots,X_n\in\MM_d(k)$ has dimension at most $d$.
\end{statement}

For $n>3$ this statement is false, $n<3$ it is true, 
and the case when 
$n=3$ is Statement \ref{conj:triple-mat}. Our goal in this section is to prove the following version of Proposition \ref{prop:conj-reformulation} which is valid for $n$ commuting matrices.

\begin{proposition}
\label{prop:conj-reformulation-nvars}
Statement \ref{statement:n-matrix-conj} is true if and only if for all finite-dimensional $S$-modules $M$ with $\sqrt{\ann M}=\m$, and $S$-module maps $\beta\colon\m\to M$, we have
\begin{equation}\label{eqn:Sm extended by M}
\dim S/\ann(M)+\dim\beta(\ann M)\leq \dim M+1.
\end{equation}
\end{proposition}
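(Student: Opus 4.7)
The plan is to use Proposition \ref{prop:S/ann<=dimIntro} as a bridge: it converts Statement \ref{statement:n-matrix-conj} into the inequality $\dim S/\ann N \le \dim N$ for all finite-dimensional $S$-modules $N$, and I then translate this further into \eqref{eqn:Sm extended by M} by realizing each such $N$ as an extension of $S/\m$ by a module $M$. By the standard reduction of Gerstenhaber-type questions to commuting nilpotent matrices (which I will fold in as a short corollary of Proposition \ref{prop:S/ann<=dimIntro}, together with the direct-sum-by-support decomposition coming from the Chinese remainder theorem), it suffices to test $\dim S/\ann N\le\dim N$ on finite-dimensional $N$ with $\sqrt{\ann N}=\m$.

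The central construction is this: given $(M,\beta)$ with $\sqrt{\ann M}=\m$ and $\beta\colon\m\to M$ an $S$-module map, set
\[
N_\beta := (S\oplus M)\big/\{(s,-\beta(s)) : s\in\m\}.
\]
This sits in an exact sequence $0\to M\to N_\beta\to S/\m\to 0$, giving $\dim N_\beta=\dim M+1$, and because $\m^{k}\subseteq\ann M$ for large $k$ forces $\beta(\m^{k+1})\subseteq\m^{k}M=0$, we also get $\sqrt{\ann N_\beta}=\m$. A direct check shows $s\in\ann N_\beta$ iff $s\in\ann M$ (to annihilate the image of $M$) and $(s,0)$ lies in the quotiented-out submodule, i.e.\ $s\in\m$ with $\beta(s)=0$; since $\sqrt{\ann M}=\m$ forces $\ann M\subseteq\m$, this reduces to $\ann N_\beta=\ann M\cap\ker\beta$. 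Hence
\[
\dim S/\ann N_\beta = \dim S/\ann M + \dim \ann M/(\ann M\cap\ker\beta) = \dim S/\ann M + \dim\beta(\ann M),
\]
so \eqref{eqn:Sm extended by M} is literally the inequality $\dim S/\ann N_\beta\le \dim N_\beta$.

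Both implications then fall out. Assuming Statement \ref{statement:n-matrix-conj}, applying Proposition \ref{prop:S/ann<=dimIntro} to $N_\beta$ immediately yields \eqref{eqn:Sm extended by M}. For the converse, assume \eqref{eqn:Sm extended by M} holds for every $(M,\beta)$ with $\sqrt{\ann M}=\m$, and take any finite-dimensional $N$ with $\sqrt{\ann N}=\m$; if $N\cong S/\m$ the inequality is trivial, and otherwise I choose a maximal proper submodule $M\subset N$, so $N/M$ is simple and supported at $\m$, hence $N/M\cong S/\m$. Lifting a generator of the quotient produces $\beta\colon\m\to M$ realizing $N\cong N_\beta$, and the hypothesis supplies the required inequality. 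The only nontrivial ingredient is the nilpotent reduction to modules with $\sqrt{\ann N}=\m$; everything else is a short pushout computation.
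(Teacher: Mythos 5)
Your proof is correct and follows essentially the same route as the paper: both reduce to modules supported at the origin via the direct-sum decomposition of Corollary~\ref{cor:S/ann<=dim-local-case}, establish the correspondence between extensions $0\to M\to N\to S/\m\to 0$ and maps $\beta\colon\m\to M$, and hinge on the annihilator computation $\ann N=\ann M\cap\ker\beta$ together with $\dim N=\dim M+1$. The only difference is cosmetic: you realize the extension explicitly as the pushout $N_\beta=(S\oplus M)/\{(s,-\beta(s)):s\in\m\}$, whereas the paper packages the same correspondence through the long exact sequence identifying $\ext^1(S/\m,M)\simeq\hom(\m,M)/M$.
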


We begin with a different module-theoretic reformulation of Statement \ref{statement:n-matrix-conj}. This appeared as Proposition \ref{prop:S/ann<=dimIntro} in the introduction.

\begin{proposition}
\label{prop:S/ann<=dim}
Statement \ref{statement:n-matrix-conj} holds if and only if for all $S$-modules $N$ which are finite-dimensional over $k$, we have
\[
\dim S/\ann(N)\leq\dim N.
\]
\end{proposition}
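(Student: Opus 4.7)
The plan is to establish the equivalence by making the standard dictionary between $n$-tuples of commuting $d\times d$ matrices over $k$ and $d$-dimensional $S$-modules fully explicit, and then observing that under this dictionary the algebra generated by the tuple corresponds exactly to $S/\ann(N)$.

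For the forward direction, I would start with a finite-dimensional $S$-module $N$ and set $d=\dim_k N$. Choosing a $k$-basis of $N$ identifies $\End_k(N)$ with $\MM_d(k)$. The action of each $x_i$ on $N$ is a $k$-linear endomorphism, hence is represented by some $X_i \in \MM_d(k)$, and the matrices $X_1,\dots,X_n$ commute because $x_i x_j = x_j x_i$ in $S$. The structure map $S \to \End_k(N)$ sending $x_i \mapsto X_i$ has image equal to the unital $k$-subalgebra generated by $X_1,\dots,X_n$, and its kernel is by definition $\ann(N)$. Therefore
\[
\dim_k S/\ann(N) = \dim_k k\langle X_1,\dots,X_n\rangle \leq d = \dim_k N,
\]
where the inequality uses Statement \ref{statement:n-matrix-conj}.

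For the reverse direction, I would run the same construction backwards: given pairwise commuting $X_1,\dots,X_n \in \MM_d(k)$, let $N := k^d$ and endow it with the $S$-module structure in which $x_i$ acts as $X_i$; this is well-defined precisely because the $X_i$ commute. Then $\dim_k N = d$, the algebra generated by the $X_i$ inside $\MM_d(k)$ is the image of the structure map $S\to\End_k(N)$, and this image is isomorphic to $S/\ann(N)$. Applying the hypothesis to $N$ gives $\dim_k S/\ann(N)\leq d$, which is exactly the conclusion of Statement \ref{statement:n-matrix-conj}.

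There is essentially no technical obstacle here: the argument is a direct translation between module-theoretic and matrix-theoretic language, and the key identity is simply that the image of $S$ in $\End_k(N)$ equals both the algebra generated by the matrices $X_i$ and the quotient $S/\ann(N)$. The only point worth flagging is that the correspondence requires choosing a $k$-basis of $N$ to convert endomorphisms into matrices, but since both sides of the inequality are basis-independent, nothing depends on this choice.
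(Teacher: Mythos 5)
Your proof is correct and follows essentially the same argument as the paper: both set up the standard dictionary between commuting $d\times d$ matrices and $d$-dimensional $S$-modules, and both identify the unital algebra generated by the matrices with $S/\ann(N)$ via the structure map $S\to\End_k(N)$.
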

\begin{proof}
Given commuting matrices $X_1,\dots,X_n\in\MM_d(k)$, we obtain an $S$-module structure on $k^{\oplus d}$ where we define multiplication by $x_i$ to be the action of $X_i$. Conversely, given any $S$-module $N$ of dimension $d$ over $k$, after fixing a basis we have $N\simeq k^{\oplus d}$ as $k$-vector spaces; multiplication by $x_1,\dots,x_n$ on $N$ can then be viewed as matrices $X_1,\dots,X_n\in\MM_d(k)$ which commute since $x_ix_j=x_jx_i$.

Let $A$ denote the unital $k$-algebra generated by our commuting matrices. We then have a surjection $\pi\colon S\to A$ given by $\pi(x_i)=X_i$, and so $A\simeq S/\ker\pi$ as $S$-modules. Under the correspondence in the previous paragraph, we have $\ker\pi=\ann(N)$ and so $A=S/\ann(N)$. Therefore, the inequality $\dim A\leq d$ is equivalent to the inequality $\dim S/\ann(N)\leq \dim N$.
\end{proof}

\begin{example}
Following the above proof, we describe some triples of commuting matrices obtained from our main theorem, Theorem \ref{thm:summaryTheorem}.
Let $S = k[x,y,z]$ and $\m = (x,y,z)$. 
Let $I\subseteq S$ be an ideal of finite colength. Each module $N$ that fits into a short exact sequence
\[
0\to S/I\xrightarrow{i} N\xrightarrow{\pi} S/\m\to 0
\]
has an ordered basis of the form $i(m_1),\dots, i(m_{d-1}), f$ where $m_1,\dots, m_{d-1}$ is a basis of $S/I$ and $\pi(f) = 1$ in $S/\m$.  Let $n_j = i(m_{j})$.
Let $X, Y, Z\in \text{M}_{d}(k)$ be determined by multiplication of $x, y, z$ on $N$ and the given basis. To describe the first $d-1$ columns of $X$, observe that each $xn_j$ is a linear combination of $n_1,\dots, n_{d-1}$, and this linear combination is determined by multiplication by $x$ in $S/I$. Thus, if $S/I$ with basis $m_1,\dots, m_{d-1}$ is given, the first $d-1$ columns of $X$ are fixed and do not depend on the choice of extension of $S/I$ by $S/\m$. 
On the other hand, the last column of $X$ may be chosen almost arbitrarily. Indeed, $xf$ is a linear combination of $n_1,\dots, n_{d-1}$ (since $\pi(xf) = 0$ in $S/\m$), but there are no restrictions on which linear combinations may occur: given any linear combination of $n_1,\dots, n_{d-1}$, one can construct an extension $N$ such that $xf$ is the given linear combination. The matrices $Y$ and $Z$ are described analogously. In other words, the matrices $X$, $Y$, and $Z$ take the form
\[
X = \left(
\begin{array}{c|c}
   \raisebox{-15pt}{{\LARGE\mbox{{$X'$}}}}&a_1 \\[-3.5ex]
   &\vdots \\[-0.5ex]
  &a_{d-1}\\
  \hline
    0 \cdots 0 &0\\ 
\end{array}
\right),
\qquad
Y = \left(
\begin{array}{c|c}
   \raisebox{-15pt}{{\LARGE\mbox{{$Y'$}}}}&b_1 \\[-4ex]
  &\vdots \\[-0.5ex]
  &b_{d-1}\\
  \hline
    0 \cdots 0 &0\\ 
\end{array}
\right),
\qquad
Z = \left(
\begin{array}{c|c}
   \raisebox{-15pt}{{\LARGE\mbox{{$Z'$}}}}&c_1 \\[-4ex]
  &\vdots \\[-0.5ex]
  &c_{d-1}\\
  \hline
    0 \cdots 0 &0\\ 
\end{array}
\right),
\]
where $X', Y', Z'\in \text{M}_{d-1}(k)$ are determined by multiplication, in $S/I$, by $x$, $y$, and $z$ respectively, and the entries $a_j$, $b_j$, $c_j$ can be chosen arbitrarily by choosing an appropriate extension of $S/I$ by $S/\m$.
\end{example}

The next observation allows us to handle the case of direct sums of modules and in particular, reduce to the local case. 

\begin{lemma}
\label{l:S/ann<=dim-direct-sums}
Let $N=N_1\oplus\dots\oplus N_r$ where each $N_i$ is an $S$-module which is finite-dimensional over $k$. If $\dim S/\ann(N_i)\leq\dim N_i$ for all $i$, then $\dim S/\ann(N)\leq\dim N$.
\end{lemma}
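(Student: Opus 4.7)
The plan is to reduce the direct-sum inequality to the componentwise one by combining two elementary facts: the annihilator of a direct sum is the intersection of the annihilators, and a ring modulo an intersection of ideals embeds into the product of the quotients.

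First I would observe that $\ann(N) = \bigcap_{i=1}^{r} \ann(N_i)$. This is immediate: an element $f \in S$ annihilates $N$ if and only if $f \cdot (n_1,\ldots,n_r) = 0$ for every tuple, which happens iff $f \in \ann(N_i)$ for each $i$. Simultaneously, $\dim N = \sum_i \dim N_i$ since direct sums of $k$-vector spaces add dimensions.

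Next, I would use the canonical $S$-module map
\[
S/\ann(N) \;=\; S\big/\bigcap_{i=1}^r \ann(N_i) \;\hookrightarrow\; \bigoplus_{i=1}^r S/\ann(N_i),
\]
which is injective because its kernel consists of those $f \in S$ lying in every $\ann(N_i)$, i.e.\ in $\bigcap_i \ann(N_i)$. Taking $k$-dimensions gives
\[
\dim S/\ann(N) \;\leq\; \sum_{i=1}^{r}\dim S/\ann(N_i).
\]

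Finally, I would apply the hypothesis $\dim S/\ann(N_i) \leq \dim N_i$ to each summand on the right to conclude
\[
\dim S/\ann(N) \;\leq\; \sum_{i=1}^{r}\dim N_i \;=\; \dim N,
\]
which is the desired inequality. There is no real obstacle here: the argument is essentially formal, and the only mild points to verify are the two elementary facts about $\ann$ of a direct sum and quotients by intersections of ideals.
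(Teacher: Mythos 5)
Your proof is correct and follows exactly the same route as the paper's: identify $\ann(N) = \bigcap_i \ann(N_i)$, embed $S/\ann(N)$ into $\bigoplus_i S/\ann(N_i)$ via the diagonal map, and compare dimensions. No differences worth noting.
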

\begin{proof}
We have $\ann(N)=\bigcap_i \ann(N_i)$, so the diagonal map
\[
S/\ann(N)\to \bigoplus_i S/\ann(N_i)
\]
is injective. As a result,
\[
\dim S/\ann(N)\leq \sum_i\dim S/\ann(N_i)\leq\sum_i \dim N_i=\dim N.\qedhere
\]
\end{proof}

\begin{corollary}
\label{cor:S/ann<=dim-local-case}
If $N$ is an $S$-module that is finite-dimensional over $k$, and $\dim S/\ann(N')\leq\dim N'$ for all localizations $N'$ of $N$, then $\dim S/\ann(N)\leq\dim N$. In particular, Statement \ref{statement:n-matrix-conj} is true if and only if $\dim S/\ann(N)\leq\dim N$ for all $S$-modules $N$ which are finite-dimensional over $k$ and such that $\supp N$ is a point.
\end{corollary}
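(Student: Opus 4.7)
The plan is to deduce this corollary quickly from Lemma \ref{l:S/ann<=dim-direct-sums} together with a standard decomposition of a finite-dimensional module into its local components.

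First I would observe that since $N$ is finite-dimensional over $k$, it is in particular a finite-length $S$-module, so its support $\supp N = \{\m_1, \dots, \m_r\}$ consists of finitely many maximal ideals of $S$. For each $i$, the localization $N_{\m_i}$ is again a finite-dimensional $k$-vector space supported at the single point $\m_i$, and there is a canonical isomorphism
\[
N \;\xrightarrow{\sim}\; \bigoplus_{i=1}^r N_{\m_i}.
\]
This is the usual statement that a module of finite length over a Noetherian ring is canonically the direct sum of its localizations at the (necessarily maximal) associated primes; I would either cite this or sketch it (each element is killed by a power of $\m_1\cdots\m_r$, and the Chinese remainder theorem lets one project onto each factor).

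Given the decomposition, I would apply Lemma \ref{l:S/ann<=dim-direct-sums} with $N_i := N_{\m_i}$. The hypothesis of the corollary says precisely that $\dim S/\ann(N_{\m_i}) \leq \dim N_{\m_i}$ for each $i$, so the lemma yields $\dim S/\ann(N) \leq \dim N$, proving the first assertion.

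For the "in particular" clause, I would combine the first assertion with Proposition \ref{prop:S/ann<=dim}: that proposition reduces Statement \ref{statement:n-matrix-conj} to the inequality $\dim S/\ann(N) \leq \dim N$ for all finite-dimensional $S$-modules $N$, and the first assertion of the corollary further reduces this to modules supported at a single point. The converse direction is immediate since modules supported at a point are a special case. There is no real obstacle here; the only slightly delicate point is the canonical direct sum decomposition of a finite-length module, which is standard and not specific to the three-variable setting.
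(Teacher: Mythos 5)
Your proposal is correct and follows essentially the same argument as the paper: decompose $N$ into the direct sum of its localizations, apply Lemma \ref{l:S/ann<=dim-direct-sums}, and then invoke Proposition \ref{prop:S/ann<=dim} for the equivalence. The extra detail you give about the finite-length decomposition is just a fuller justification of what the paper states tersely.
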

\begin{proof}
Since $N$ is finite-dimensional as a $k$-vector space, its support consists of finitely many points, and $N$ can be written as a direct sum $N_1\oplus\dots\oplus N_r$ where $N_i$ are the localizations of $N$. By Lemma \ref{l:S/ann<=dim-direct-sums}, the desired inequality for $N$ is implied by that for each of the $N_i$. In particular, $\dim S/\ann(N)\leq \dim N$ for all $N$ if and only if it holds for those $N$ which are supported at a point. The corollary then follows from Proposition \ref{prop:S/ann<=dim}.
\end{proof}

\begin{remark}\label{rem:nilpotent}
Corollary \ref{cor:S/ann<=dim-local-case} is equivalent to the following statement about commuting matrices: ``Statement \ref{statement:n-matrix-conj} holds for all choices of pairwise commuting matrices $X_1, X_2,\dots, X_n\in \MM_d(k)$ if and only if it holds for all choices of $n$ pairwise commuting \emph{nilpotent} matrices.'' To see this, we follow the proof of  Proposition \ref{prop:S/ann<=dim}, using the same notation: if the matrices $X_i$ are nilpotent then one easily checks that $S/\ann N$ is a local ring with maximal ideal $\m$. Conversely, if $S/\ann N$ is local with maximal ideal $\m$ then $\ann N \supseteq \m^c$ for some large enough $c$. Identifying the variable $x_i\in S$ with the matrix $X_i$ as in the proof of Proposition \ref{prop:S/ann<=dim}, and recalling that $S/\ann N\cong A$, we see that each $X_i$ satisfies $X_i^c=0$, hence is nilpotent.
\end{remark}

We next prove Proposition \ref{prop:conj-reformulation-nvars}, and hence Proposition \ref{prop:conj-reformulation}.

\begin{proof}[Proof of Proposition \ref{prop:conj-reformulation-nvars}]
By Corollary \ref{cor:S/ann<=dim-local-case}, we need only show that the inequality in the statement of the theorem is equivalent to $\dim S/\ann(N)\leq\dim N$ for all $S$-modules $N$ which are finite-dimensional over $k$ and such that $\supp N$ is a point. Let $N$ be such a module. By translation, we can assume without loss of generality that its support is the origin, i.e.~$\sqrt{\ann N}=\m$. Then the Jordan-H\"older filtration yields  a short exact sequence
\[
0\to M\to N\to S/\m\to 0
\]
and this corresponds to a class $\alpha\in\ext^1(S/\m,M)$. 

From the short exact sequence
\[
0\to\m\to S\to S/\m\to0
\]
we have a long exact sequence
\[
0\to \hom(S/\m,M)\to M\to \hom(\m,M)\to\ext^1(S/\m,M)\to 0.
\]
So, $\ext^1(S/\m,M)\simeq\hom(\m,M)/M$. Thus, our class $\alpha$ lifts to an $S$-module map $\beta\colon\m\to M$. It is easy to check that $\ann(N)=\ann(M)\cap\ker\beta$, so
\[
\begin{split}
\dim S/\ann(N) & =\dim S/\ann(M)+\dim\ann(M)/(\ann(M)\cap\ker\beta) \\ 
& =\dim S/\ann(M)+\dim\beta(\ann M).
\end{split}
\]
Since $\dim N=\dim M + 1$, the inequality
\[
\dim S/\ann(M)+\dim\beta(\ann M)\leq \dim M+1
\]
is equivalent to $\dim S/\ann(N)\leq\dim N$.
\end{proof}

\begin{remark}
\label{rmk:equivalence of 2 inequalities for m mapsto M}
The proof of Proposition \ref{prop:conj-reformulation-nvars} shows that if
\[
0\to M\to N\to S/\m\to 0
\]
is the extension corresponding to the map $\beta\colon \m\to M$, then the inequality (\ref{eqn:Sm extended by M}) holds if and only if the inequality (\ref{eqn:dimM}) holds for $N$. Note that $N$ only depends on the class $[\beta]\in\ext^1(S/\m,M)$. Consequently, whether or not the inequality (\ref{eqn:Sm extended by M}) holds depends only on the class $[\beta]$ rather than the map $\beta$ itself.
\end{remark}

In light of Proposition \ref{prop:conj-reformulation-nvars}, we make the following definition:

\begin{definition}
\label{def:beta-M-counter-ex}
Let $M$ be an $S$-module which is finite-dimensional as a $k$-vector space with $\sqrt{\ann M}=\m$. If $\beta\colon\m\to M$ is an $S$-module map, we say that $(\beta,M)$ is a \emph{counter-example} if $\dim S/\ann(M)+\dim\beta(\ann M)> \dim M+1$.
\end{definition}


\begin{corollary}
\label{cor:conj-reformulation-nvars-cyclic}
If $I\subseteq S$ is an ideal with $S/I$ finite-dimensional over $k$, then $(\beta,S/I)$ is a counter-example if and only if $\dim\beta(I)\geq2$.
\end{corollary}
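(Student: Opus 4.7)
The plan is to simply unpack the definition of counter-example (Definition 2.7) in the cyclic case $M = S/I$. Since $M = S/I$ and $S/I$ is finite-dimensional, the annihilator satisfies $\ann(M) = I$, and this ideal is contained in $\m$ because $\sqrt{I} = \sqrt{\ann M} = \m$ (this is part of the implicit hypothesis on $M$ in Definition 2.7, since $M = S/I$ is finite-dimensional over $k$ supported at $\m$, which follows from the setup of Proposition 2.2 together with localization). In particular, the restriction $\beta(I) := \beta(\ann M)$ makes sense as a submodule of $M$.

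With these identifications, I would compute both sides of the inequality in Definition 2.7. We have $S/\ann(M) = S/I = M$, so $\dim S/\ann(M) = \dim M$. Therefore the counter-example inequality
\[
\dim S/\ann(M) + \dim \beta(\ann M) > \dim M + 1
\]
becomes
\[
\dim M + \dim \beta(I) > \dim M + 1,
\]
which holds if and only if $\dim \beta(I) \geq 2$. This is exactly the desired equivalence.

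There is no substantive obstacle here; the corollary is a direct computation from the definition, recorded because the case $M = S/I$ is the one being singled out from the general framework. The only minor point worth mentioning is that $\beta(I)$ is automatically a well-defined $k$-subspace of $M$ (indeed an $S$-submodule) since $I \subseteq \m$ and $\beta$ is $S$-linear on $\m$; this ensures the numerical quantity $\dim \beta(I)$ on the right-hand side is meaningful without further hypothesis.
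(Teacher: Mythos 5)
Your proof is correct and is essentially the same as the paper's: both simply substitute $\ann(S/I)=I$ into Definition \ref{def:beta-M-counter-ex} and observe that the inequality reduces to $\dim\beta(I)>1$. The extra remarks you include about $\sqrt{I}=\m$ and the well-definedness of $\beta(I)$ are harmless but not needed, since the definition already assumes $\sqrt{\ann M}=\m$.
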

\begin{proof}
Let $M=S/I$ and notice that $\ann(S/I)=I$. By definition, $(\beta,S/I)$ is a counter-example if and only if 
\[
\dim S/I+\dim\beta(I)=\dim S/\ann(M)+\dim\beta(\ann M)> \dim M+1=\dim S/I+1.
\]
In other words, it is a counter-example if and only if $\dim\beta(I)>1$.
\end{proof}

We end with the following result which reduces our search for counter-examples $(\beta,M)$ to the case where $M$ is indecomposable. Note the distinction from Lemma \ref{l:S/ann<=dim-direct-sums}: the lemma concerns the inequality $\dim S/\ann(M)\leq\dim M$ whereas the proposition below concerns the inequality $\dim S/\ann(\widetilde{M})\leq\dim \widetilde{M}$, where $\widetilde{M}$ is the extension of $S/\m$ by $M$ defined by $\beta$.

\begin{proposition}
\label{prop:direct-sums}
Let $\beta\colon\m\to M$ be an $S$-module map with $M$ finite-dimensional over $k$ and satisfying $\sqrt{\ann M}=\m$. Suppose $M=N_1\oplus N_2$ and let $\pi_j\colon M\to N_j$ be the two projections. If neither $(\pi_1\beta,N_1)$ nor $(\pi_2\beta,N_2)$ is a counter-example, then $(\beta,M)$ is also not a counter-example.
\end{proposition}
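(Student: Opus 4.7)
The plan is to compare the two sides of the desired inequality for $(\beta,M)$ with the analogous inequalities for $(\pi_j\beta,N_j)$ and to extract an extra factor of $-1$ from the direct-sum structure. Writing $\beta_j:=\pi_j\beta$, I would first observe that $\ann(M)=\ann(N_1)\cap\ann(N_2)$, so the diagonal map fits into a short exact sequence
\[
0\to S/\ann(M)\to S/\ann(N_1)\oplus S/\ann(N_2)\to S/(\ann(N_1)+\ann(N_2))\to 0,
\]
which yields
\[
\dim S/\ann(M)=\dim S/\ann(N_1)+\dim S/\ann(N_2)-\dim S/(\ann(N_1)+\ann(N_2)).
\]

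For the second term, the decomposition $\beta(r)=(\beta_1(r),\beta_2(r))\in N_1\oplus N_2$ together with $\ann(M)\subseteq\ann(N_j)$ gives
\[
\dim\beta(\ann M)\leq \dim\beta_1(\ann M)+\dim\beta_2(\ann M)\leq \dim\beta_1(\ann N_1)+\dim\beta_2(\ann N_2).
\]

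The crucial step is to show $\dim S/(\ann(N_1)+\ann(N_2))\geq 1$. If either $N_j$ is zero, the proposition is immediate, so I may assume both are nonzero. Then each $\ann(N_j)$ is proper, and since $\sqrt{\ann(N_j)}\supseteq\sqrt{\ann(M)}=\m$ with $\m$ maximal, I would conclude $\ann(N_j)\subseteq\sqrt{\ann(N_j)}=\m$. Hence $\ann(N_1)+\ann(N_2)\subseteq\m$, so $S/(\ann(N_1)+\ann(N_2))$ surjects onto $S/\m=k$. Combining this with the assumption that neither $(\beta_j,N_j)$ is a counter-example gives
\[
\dim S/\ann(M)+\dim\beta(\ann M)\leq \sum_{j=1}^{2}(\dim N_j+1)-1=\dim M+1,
\]
as required.

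The main (and really only) obstacle is that the naive sum of the two hypotheses produces $\dim M+2$, which is one too large; the saving of $1$ comes precisely from the fact that $M$ is supported at the single maximal ideal $\m$, forcing the diagonal $S/\ann(M)\hookrightarrow S/\ann(N_1)\oplus S/\ann(N_2)$ to be non-surjective. Identifying and quantifying that non-surjectivity via the short exact sequence above is the one nontrivial observation in the argument.
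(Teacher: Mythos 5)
Your proof is correct and captures the same key insight as the paper: inclusion--exclusion on annihilators together with the observation that $\sqrt{\ann M}=\m$ forces $\ann(N_1)+\ann(N_2)\subseteq\m$, which supplies the crucial savings of $1$. The paper packages $\dim S/\ann(M)+\dim\beta(\ann M)$ as $\dim S/\ann(\widetilde M)$ for the extension $\widetilde M$ and applies inclusion--exclusion to the $\ann(\widetilde M_j)$ all at once, whereas you split off $\dim\beta(\ann M)$ and bound it by sub-additivity, but this is a cosmetic reorganization of essentially the same argument.
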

\begin{proof}
Let $\widetilde{M}$ be the extension defined by $\beta$, and let $\widetilde{M}_j$ be the extension defined by $\pi_j\beta$. We must show $\dim S/\ann(\widetilde{M})\leq\dim\widetilde{M}$. We know that
\[
\ann(\widetilde{M})=\ker\beta\cap\ann(M)=\bigcap_j\ker(\pi_j\beta)\cap\bigcap_j\ann(N_j)=\bigcap_j\ann(\widetilde{M}_j),
\]
so we have
\[
\dim S/\ann(\widetilde{M}) =\sum_j\dim S/\ann(\widetilde{M}_j)-\dim S/(\ann(\widetilde{M}_1)+\ann(\widetilde{M}_2)).
\]
Since $(\pi_j\beta,N_j)$ is not a counter-example, we have $\dim S/\ann(\widetilde{M}_j)\leq \dim\widetilde{M}_j$. Also notice that $\sqrt{\ann{M}}=\m$ implies $\ann(\widetilde{M}_1)+\ann(\widetilde{M}_2)\subseteq\m$. Hence, $\dim S/(\ann(\widetilde{M}_1)+\ann(\widetilde{M}_2))\geq1$. Since
\[
\dim\widetilde{M}=1+\sum_j\dim N_j=-1+\sum_j\dim\widetilde{M}_j,
\]
we have our desired inequality $\dim S/\ann(\widetilde{M})\leq \dim\widetilde{M}$.
\end{proof}

\section{Reducing Theorem \ref{thm:summaryTheorem} to a special case}
\label{sec:reducing main theorem to special case}

Throughout this section, we fix an infinite field $k$, let $S=k[x_1,x_2,x_3]$ and $\m=(x_1,x_2,x_3)$. The goal of the next two sections is to prove Theorem \ref{thm:summaryTheorem}. By Lemma \ref{l:S/ann<=dim-direct-sums}, we reduce immediately to the case where $\supp(S/I)=\m$, so we must prove:

\begin{theorem}
\label{thm:main}
If $M$ is cyclic and $\supp M = \m$, then $(\beta,M)$ is not a counter-example in the sense of Definition \ref{def:beta-M-counter-ex}.
\end{theorem}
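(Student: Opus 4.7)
By Corollary~\ref{cor:conj-reformulation-nvars-cyclic}, Theorem~\ref{thm:main} amounts to showing $\dim_k\beta(I)\leq 1$ for every $S$-module map $\beta\colon\m\to S/I$, where $I\subseteq\m$ has finite colength and $M=S/I$. A first general observation reduces the question to the socle: for $x\in\m$ and $f\in I$, $S$-linearity gives $x\beta(f) = \beta(xf)$, and writing $xf\in\m I$ as a finite sum $\sum_j i_j m_j$ with $i_j\in I,\,m_j\in\m$ yields $\beta(xf) = \sum_j i_j\beta(m_j)\in I\cdot M = 0$. Thus $\beta(I)\subseteq\soc(M)$, and the real content of the theorem is that the $S$-linearity of $\beta$, together with the three-variable hypothesis, forces $\beta(I)$ to lie on a line inside the socle.

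My plan is to argue by contradiction, assuming $(\beta, S/I)$ is a counter-example (equivalently, by Corollary~\ref{cor:conj-reformulation-nvars-cyclic}, that $\dim\beta(I)\geq 2$), and to execute the two-step reduction advertised in the introduction. In Step 1 (Theorem~\ref{thm:main-special-case}), I would massage the pair $(I, \beta)$, while preserving the counter-example property, until $I$ takes the very restricted form $I=(F_{12},F_{13},F_{23},g)$, where the $F_{ij}$ are the $2\times 2$ minors of a specific $2\times 3$ matrix of linear forms and $g$ is a non-zerodivisor modulo $J := (F_{12},F_{13},F_{23})$. The intended reductions are: first, translate $\supp(S/I)$ to the origin via an affine change of coordinates; next, replace $\beta$ by $\beta + (\text{multiplication by } m)$ for an $m\in M$ chosen to put $\beta$ into convenient normal form, noting that this does not change the class $[\beta]\in\ext^1(S/\m,M)$ nor the counter-example status, by Remark~\ref{rmk:equivalence of 2 inequalities for m mapsto M}; finally, having chosen two $k$-independent elements of $\beta(I)\subseteq\soc(M)$, isolate a codimension-$2$ piece $J\subseteq I$ and invoke the Hilbert--Burch theorem, which in $S=k[x_1,x_2,x_3]$ represents a codimension-$2$ Cohen--Macaulay ideal as the ideal of maximal minors of an explicit $3\times 2$ matrix of syzygies. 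A single non-zerodivisor $g$ then restores codimension $3$ and returns us to the Artinian $S/I$. This step is where the three-variable hypothesis enters decisively: Hilbert--Burch produces an explicit syzygy matrix only because we are in a polynomial ring of dimension $3$ with a codimension-$2$ ideal.

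In Step 2 (Section~\ref{sec:main-thm}), with $I$ in the normal form above, the Hilbert--Burch resolution supplies two explicit syzygies $\sum_j A_{j\ell}F_{ij}=0$ in $S$, which, when $\beta$ is applied, produce relations $\sum_j A_{j\ell}\beta(F_{ij}) = 0$ in $M$. Combined with the $S$-linearity constraints $x_i b_j = x_j b_i$ for $b_i := \beta(x_i)$ (which, when the entries of $A$ are linear, are compatible with these new relations), these syzygies should force the three elements $\beta(F_{12}),\beta(F_{13}),\beta(F_{23})\in\soc(M)$ to be collinear; a parallel argument using that $g$ is a non-zerodivisor modulo $J$ pins $\beta(g)$ to the same line, so $\dim\beta(I)\leq 1$, contradicting the counter-example assumption. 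The main obstacle I foresee is Step~1: verifying that an arbitrary counter-example can be massaged into the above normal form without losing the counter-example property requires careful control over how that property interacts with each simplification of $I$ and $\beta$. Step~2, by contrast, should reduce to an essentially mechanical calculation once the normal form is in hand.
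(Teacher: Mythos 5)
Your socle observation $\beta(I)\subseteq\soc(M)$ is correct and matches the paper's Lemma~\ref{lem:genset}, and your two-step plan has the right overall shape. However, there are two substantial gaps in the proposal as written.

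First, the $F_{ij}$ do not come from Hilbert--Burch, and would not fall out of it in usable form. The key observation the paper makes (and the proposal misses) is that the $2\times 3$ matrix arises directly from the $S$-linearity of $\beta$ itself: choosing representatives $f_i\in S$ with $\beta(x_i)=f_i$ mod $I$, the relation $x_j\beta(x_i)=x_i\beta(x_j)$ in $M$ forces $F_{ij}:=x_if_j-x_jf_i\in I$. The relevant matrix is $\begin{pmatrix}x_1&x_2&x_3\\f_1&f_2&f_3\end{pmatrix}$ with first row linear and second row the (in general nonlinear) $f_i$ --- not a ``matrix of linear forms.'' One then shows $(F_{12},F_{13},F_{23})$ has codimension~$2$, but even this is not automatic: the paper's Proposition~\ref{prop:dim1} needs a Bertini-type argument \emph{and} a minimality assumption on $\dim S/I$ (that a smaller cyclic counter-example cannot exist), which your proposal never invokes. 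Trying instead to pick some codimension-$2$ $J\subseteq I$ first and then present it via Hilbert--Burch gives no control linking the syzygy matrix to $\beta$, which is exactly the control needed downstream.

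Second, your envisioned Step~2 would collapse trivially because $\beta(F_{ij})=f_j\beta(x_i)-f_i\beta(x_j)=f_jf_i-f_if_j=0$, so relations $\sum_j A_{j\ell}\beta(F_{ij})=0$ carry no information; there is nothing to ``force collinear.'' The genuine content of Step~2 is far from mechanical. It requires: (a) computing $\dim\soc(S_\m/J_\m)=2$ exactly via the Eagon--Northcott resolution of the determinantal ideal; (b) a structural lemma for Artinian Gorenstein rings, $\ann(I_1)+\ann(I_2)=\ann(I_1\cap I_2)$ (Lemma~\ref{l:Gor-ann-int}); (c) the nontrivial divisibility result Proposition~\ref{prop:Gor-divisibility-property} (if $\gamma(q)$ is divisible by $q$ for all $q\in\m$ then $\gamma$ is multiplication by a fixed element); and (d) an induction on the least $\ell$ with $\m^\ell h\subseteq\soc$, with several cases. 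None of this is visible from the proposal, and characterizing it as ``an essentially mechanical calculation'' underestimates where the real difficulty lies: Step~1 is the easier reduction, and Step~2 is where the three-variable hypothesis is genuinely exploited via Gorenstein duality.
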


%
%
The focus of this section is to prove the following theorem, which reduces Theorem \ref{thm:main} to a special case:


\begin{theorem}
\label{thm:main-special-case}
Suppose $(\beta,M)$ is a counter-example 
over $k$ with $M$ cyclic and $\sqrt{\ann M}=\m$. Then there exist $f_1,f_2,f_3\in\m$ and $g,h\in\ann(M)$ with the following properties:
\begin{enumerate}
\item\label{item::dimSJ} letting $F_{ij}=x_if_j-x_jf_i$ and $J=(F_{12},F_{13},F_{23},g)$, we have $h\notin J$, the module $S/J$ is finite-dimensional over $k$, and $\m\in\supp(S/J)$,
\item\label{item::lift-beta-SJ} letting $\beta'\colon\m\to S/J$ be the $S$-module map defined by $\beta'(x_i)=f_i$, the elements $\beta'(g)$ and $\beta'(h)$ are linearly independent in the localization of $S/(J+\<h\>)$ at $\m$, and
\item\label{item::socSJ2} $\dim\soc(S_\m/J_\m)=2$.
\end{enumerate}
\end{theorem}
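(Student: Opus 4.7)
My plan begins with the cyclic presentation. Since $M$ is cyclic with $\sqrt{\ann M}=\m$, I write $M=S/I$ with $I$ an $\m$-primary ideal, and choose lifts $f_i\in\m$ of $\beta(x_i)\in S/I$ for $i=1,2,3$. The $S$-linearity of $\beta$ forces each $F_{ij}=x_if_j-x_jf_i$ to lie in $I$. A useful preliminary observation is that $\beta(I)\subseteq\soc(S/I)$: for any $b=\sum_i c_ix_i\in I$, one computes $x_j\beta(b)=\sum_i c_ix_jf_i\equiv\sum_i c_ix_if_j=f_jb\equiv 0\pmod{I}$ using $F_{ij}\in I$. Since $(\beta,M)$ is a counter-example, Corollary~\ref{cor:conj-reformulation-nvars-cyclic} gives $\dim_k\beta(I)\geq 2$, so I can select $g_0,h\in I$ with $\beta(g_0)$ and $\beta(h)$ linearly independent in $\soc(S/I)$.

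Next, I will exploit two sources of freedom: the lifts $f_i$ are determined only modulo $I$, and $g_0$ may be replaced by $g_0+g'$ for any $g'\in I\cap\ker\beta$. Since $I$ is $\m$-primary it contains $\m^N$ for some $N$, giving ample room to perturb. Using the infinite-field hypothesis together with prime avoidance, I will arrange the lifts $f_i$ and an element $g\in I$ (with $\beta(g),\beta(h)$ still linearly independent in the socle) so that (a) the ideal $(F_{12},F_{13},F_{23})$ of $2\times 2$ minors of $\smat{x_1&x_2&x_3\\f_1&f_2&f_3}$ has codimension $2$ at $\m$, and (b) $g$ is a non-zero-divisor on $R:=S/(F_{12},F_{13},F_{23})$ at $\m$. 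Condition (a) is an open condition on the lifts, and a fully degenerate case (for instance $f_i\equiv x_iu\pmod I$) would force $\beta(I)=0$, contradicting independence. For (b), the minimal primes of $(F_{12},F_{13},F_{23})$ at $\m$ have codimension $2$, so none of them can contain $I$ (whose radical $\m$ has codimension $3$); prime avoidance then produces a suitable $g$.

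With $J=(F_{12},F_{13},F_{23},g)$, the verifications now fall into place. For~\eqref{item::dimSJ}, $J\subseteq\m$ is clear, and by (a)--(b) the codimension of $J$ at $\m$ is $3$, so $S/J$ is finite-dimensional. To check $h\notin J$, if $h=\sum\lambda_{ij}F_{ij}+\mu g$ for some $\lambda_{ij},\mu\in S$, applying $\beta$ (with $\beta(F_{ij})=f_jf_i-f_if_j=0$, and with $\mu\beta(g)=(\mu\bmod\m)\beta(g)$ since $\beta(g)\in\soc(S/I)$) yields $\beta(h)\in k\cdot\beta(g)$, contradicting independence. For~\eqref{item::socSJ2}, the Hilbert--Burch theorem gives a resolution $0\to S^2\xrightarrow{\phi}S^3\to S\to R\to 0$ with $\phi=\smat{x_1&f_1\\x_2&f_2\\x_3&f_3}$, so $R$ is Cohen--Macaulay of dimension $1$ with canonical module $\coker(\phi^T)$ minimally generated by $2$ elements; hence $R$ has Cohen--Macaulay type $2$, and modding out by the non-zero-divisor $g$ produces $\dim\soc((S/J)_\m)=2$. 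For~\eqref{item::lift-beta-SJ}, the natural surjection $S/(J+\<h\>)\twoheadrightarrow S/I$ sends $\beta'(g),\beta'(h)$ to $\beta(g),\beta(h)$, and since $S/I$ is $\m$-primary, any $k$-linear dependence of $\beta'(g),\beta'(h)$ in $(S/(J+\<h\>))_\m$ would descend to one of $\beta(g),\beta(h)$ in $S/I$, which is impossible.

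The principal obstacle will be the genericity step of the second paragraph: rigorously verifying that perturbations within $I$ of the lifts $f_i$, together with the available choices of $g$, jointly suffice to realize (a) and (b). This is precisely where the infinite-field hypothesis and the $\m$-primary nature of $I$ become indispensable, providing enough polynomial room to avoid the proper closed subsets defined by the failure of these conditions.
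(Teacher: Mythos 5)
Your outline tracks the paper's overall strategy (cyclic presentation, $F_{ij}\in I$, $\beta(I)\subseteq\soc(S/I)$, codimension-two claim for the minors, non-zero-divisor $g$, resolution-based socle computation), and your verifications of $h\notin J$, of the lift of $\beta$, and of the socle dimension are essentially fine. But the step you flag as the ``principal obstacle'' is a genuine gap, and the paper's treatment of it is not a genericity argument that you could routinely fill in.

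Specifically: the paper proves that \emph{every} minimal prime of $(F_{12},F_{13},F_{23})$ has height $2$ (its Proposition~\ref{prop:dim1}), and this is not shown by perturbing the lifts $f_i$ within $f_i+I$. Your heuristic that ``this is an open condition on the lifts'' runs into a concrete difficulty: since the $f_i$ only vary by elements of $I$, and $I$ may lie deep inside $\m$, the low-order parts of the $F_{ij}$ are locked in, so a generic perturbation need not avoid a common divisor of the $F_{ij}$. You explicitly dispose only of the ``fully degenerate'' case $f_i\equiv x_i u$ (height $0$). The hard case is a height-one minimal prime $(p)$, and the paper handles it by an entirely different mechanism: it imposes a \emph{minimality} assumption on $\dim S/I$ at the outset of Section~\ref{sec:reducing main theorem to special case}, then uses Bertini to choose a generic linear form $y$ with $(p,y)$ prime, deduces $f_i=qx_i+r_ip$, and uses Remark~\ref{rmk:equivalence of 2 inequalities for m mapsto M} to replace $\beta$ by the summand $\beta''$ that factors through the proper submodule $(p)\simeq S/\ann(p)$; minimality then forces $\dim\beta(I)\leq 1$, a contradiction. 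None of this minimality/factoring machinery appears in your proposal, and without it I see no way to rule out a height-one component.

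Two further points. First, you only claim codimension $2$ ``at $\m$,'' but item~(\ref{item::dimSJ}) of the theorem asserts $S/J$ is finite-dimensional over $k$ \emph{globally}; a height-one component of $V(F_{12},F_{13},F_{23})$ away from $\m$ would make $V(J)$ positive-dimensional there, so the local statement is not enough (the paper gets the global height-two statement precisely to avoid this). Second, you choose lifts $f_i\in\m$ without justification; that $\beta(x_i)$ lies in the maximal ideal of $S/I$ is not automatic and is the content of the paper's Proposition~\ref{prop:furtherReduction}, which argues that otherwise $\m/I$ is principal and $\soc(S/I)$ is one-dimensional, contradicting $\dim\beta(I)\geq 2$.
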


This theorem is proved over the course of \S\S\ref{subsec:nzd}--\ref{subsec:socle}. We begin with some preliminary results. Since $M$ is cyclic, it is of the form $S/I$ where $I$ is an ideal of $S$ with $\sqrt{I}=\m$. By Corollary \ref{cor:conj-reformulation-nvars-cyclic}, we know $\dim\beta(I)\geq2$. Furthermore, we can make a minimality assumption: we may assume that $S/I$ is the cyclic module of smallest dimension for which there exists a counter-example $(\beta,S/I)$, i.e.~for all cyclic modules $S/K$ with $\dim S/K<\dim S/I$ and all pairs $(\gamma,S/K)$, we can assume $\dim\gamma(K)\leq1$.

Let $f_i\in S$ such that $\beta(x_i)=f_i$ mod $I$. Letting $F_{ij}=x_if_j-x_jf_i$, we see $F_{ij}\in I$. So, we can write
\[
I = (F_{12}, F_{13}, F_{23}, g_1,\dots, g_s)
\]
for some polynomials $g_1,\dots, g_s\in \frak{m}$.  

\begin{lemma}\label{lem:genset}
$\beta(I)\subseteq\soc(S/I)$ is the vector space spanned by the $\beta(g_i)$. Moreover, there exist $i\neq j$ such that $\beta(g_i)$ and $\beta(g_j)$ are linearly independent.
\end{lemma}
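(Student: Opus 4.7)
The plan is to prove the two assertions separately, using only $S$-linearity of $\beta$, the relation $\beta(x_i) \equiv f_i \pmod{I}$, and the counter-example assumption recorded just before the statement.

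First I would establish the containment $\beta(I) \subseteq \soc(S/I)$. For any $a \in I$ and any $i$, $S$-linearity gives
\[
x_i \beta(a) \;=\; \beta(x_i a) \;=\; a\, \beta(x_i) \;=\; a f_i \pmod{I},
\]
which vanishes since $a \in I$. Because $\m = (x_1, x_2, x_3)$, this shows $\m \cdot \beta(a) = 0$, i.e.\ $\beta(a) \in \soc(S/I)$.

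Next I would verify that $\beta(F_{ij}) = 0$ in $S/I$ by the direct computation $\beta(F_{ij}) = f_j \beta(x_i) - f_i \beta(x_j) = f_j f_i - f_i f_j = 0$. Since $I$ is generated as an $S$-module by $\{F_{ij}\} \cup \{g_1, \dots, g_s\}$ and $\beta$ is $S$-linear, $\beta(I)$ coincides with the $S$-submodule of $S/I$ generated by $\beta(g_1), \dots, \beta(g_s)$. But by the previous step $\beta(I) \subseteq \soc(S/I)$, and $S$ acts on $\soc(S/I)$ through its quotient $S/\m = k$; hence this $S$-submodule is exactly the $k$-linear span of the $\beta(g_i)$.

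For the linear independence claim, recall from the discussion preceding the lemma that we have reduced to the case where $(\beta, S/I)$ is a counter-example. By Corollary \ref{cor:conj-reformulation-nvars-cyclic} this forces $\dim \beta(I) \geq 2$, so among the $\beta(g_i)$, which span $\beta(I)$ by the previous paragraph, there must be two indices $i \neq j$ with $\beta(g_i)$ and $\beta(g_j)$ linearly independent. No step here is a real obstacle; the lemma is a direct unwinding of the hypotheses and functions as a bookkeeping step in the reduction toward Theorem \ref{thm:main-special-case}.
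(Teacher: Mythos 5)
Your proof is correct and follows essentially the same route as the paper's: establish $\beta(I)\subseteq\soc(S/I)$ from $S$-linearity and $p\beta(x_i)=0$ for $p\in I$, compute $\beta(F_{ij})=0$ directly, deduce that $\beta(I)$ is the $k$-span of the $\beta(g_i)$ since the socle is killed by $\m$, and then invoke $\dim\beta(I)\geq 2$ from the counter-example assumption via Corollary \ref{cor:conj-reformulation-nvars-cyclic}. The only cosmetic difference is that you make explicit the observation that $S$ acts on $\soc(S/I)$ through $S/\m=k$; the paper leaves that implicit.
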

\begin{proof}
Notice that if $p\in I$, then $x_i\beta(p)=p\beta(x_i)=0$ in $S/I$. This shows that $\beta(I)\subseteq\soc(S/I)$.
Next observe that
\[
\beta(F_{ij})=\beta(x_i)f_j-\beta(x_j)f_i=f_if_j-f_jf_i=0.
\]
As a result, $\beta(I)$ is the ideal generated by the $\beta(g_j)$ and since the $\beta(g_j)$ are contained in $\soc(S/I)$, this ideal is nothing more than the vector space they span. Since $\dim\beta(I)\geq2$, there must be $i\neq j$ such that $\beta(g_i)$ and $\beta(g_j)$ are linearly independent.
\end{proof}

\begin{proposition}\label{prop:furtherReduction}
For each $1\leq i\leq 3$, we have $f_i\in \frak{m}$.
\end{proposition}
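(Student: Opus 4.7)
The plan is to argue by contradiction. Suppose some $f_i\notin\m$; by relabelling the variables we may assume $f_1\notin\m$. Since $\sqrt{I}=\m$, the ring $R:=S/I$ is a finite-dimensional local $k$-algebra with maximal ideal $\bar\m:=\m/I$ and residue field $k$. Because $f_1$ has nonzero constant term, its class $\bar f_1\in R$ is a unit.

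The key observation is that the relations $F_{1j}=x_1f_j-x_jf_1\in I$ become, in $R$,
\[
\bar x_j\bar f_1=\bar x_1\bar f_j,\qquad j=2,3,
\]
and since $\bar f_1$ is invertible we may solve $\bar x_j=\bar x_1\cdot\bar f_j\bar f_1^{-1}$ for $j=2,3$. Hence
\[
\bar\m=(\bar x_1,\bar x_2,\bar x_3)=(\bar x_1),
\]
so the maximal ideal of $R$ is principal.

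Next I would invoke the standard fact that in a local Artinian ring with principal maximal ideal $(t)$, every nonzero element has the form $t^ju$ for some $0\leq j<n$ and some unit $u$, where $n$ is the least integer with $t^n=0$; consequently $\soc(R)=\ann_R(t)=(t^{n-1})$ is $1$-dimensional over $k$. Applying this with $t=\bar x_1$, we conclude $\dim_k\soc(R)=1$. But Lemma \ref{lem:genset} gives $\beta(I)\subseteq\soc(R)$ together with $\dim_k\beta(I)\geq 2$, a contradiction. Therefore every $f_i$ must lie in $\m$.

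I expect no serious obstacle. The only step requiring care is the socle computation for local Artinian rings with principal maximal ideal, but this is a short, self-contained calculation via the decomposition of any element as $t^ju$. Note also that the minimality assumption on $S/I$ is not needed for this proposition; only that $(\beta,S/I)$ is a counter-example, so that $\dim\beta(I)\geq 2$ via Corollary \ref{cor:conj-reformulation-nvars-cyclic}.
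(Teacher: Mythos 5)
Your proposal is correct and follows essentially the same route as the paper: invert $\bar f_1$ to show $\bar\m=(\bar x_1)$ is principal, deduce that $\soc(S/I)$ is one-dimensional, and contradict Lemma \ref{lem:genset}. The paper phrases the socle step via "every ideal is a power of the maximal ideal, hence principal," whereas you use the equivalent $t^ju$ decomposition, but this is a cosmetic difference; your observation that the minimality assumption on $S/I$ is not needed here is accurate and matches the paper's proof.
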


\begin{proof}
Without loss of generality, assume that $f_1\notin \frak{m}$. Since $\sqrt{I}=\m$, we see $S/I$ is an Artin local ring, and so $f_1$ is a unit in $S/I$. Since $x_2f_1-x_1f_2=F_{12}$ is $0$ in $S/I$, we see $x_2 = x_1f_2f_1^{-1}\in(x_1)$. Similarly for $x_3$.

Thus, the maximal ideal $\m/I$ of $S/I$ is principally generated by $x_1$. Any Artin local ring with principal maximal ideal has the property that all ideals are of the form $(\m/I)^n$, hence principal. Since $\soc(S/I)$ is an ideal, it is principal and so 1-dimensional. This contradicts Lemma \ref{lem:genset} which shows that $\dim\soc(S/I)\geq2$.
\end{proof}

\subsection{Showing the existence of a non-zero divisor}
\label{subsec:nzd}
In this subsection, we show the existence of $g$ and $h$ in the statement of Theorem \ref{thm:main-special-case}. Let us give an outline of how we proceed. We begin by showing that $S/(F_{12},F_{13},F_{23})$ is Cohen-Macaulay of Krull dimension 1. Now there are of course many choices of $g$ such that $S/(F_{12},F_{13},F_{23},g)$ has Krull dimension 0 (i.e.~finite dimensional over $k$), however to prove Theorem \ref{thm:main-special-case}, we need to guarantee that $\dim\soc(S_\m/J_\m)=2$ and that $\beta'(g)$ and $\beta'(h)$ are linearly independent. This is accomplished by choosing $g$ to be a suitable non-zero divisor in $S/(F_{12},F_{13},F_{23})$.

\begin{proposition}\label{prop:dim1}
Every minimal prime of $S$ over $(F_{12}, F_{13}, F_{23})$ has height $2$.
\end{proposition}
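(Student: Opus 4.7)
The plan has two parts: show the height is at most $2$, then at least $2$.

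For the upper bound, I will invoke the classical bound on determinantal ideals: for a $2\times 3$ matrix over $S$, every minimal prime of the ideal of maximal minors has height at most $(2-2+1)(3-2+1)=2$. Since $I' := (F_{12}, F_{13}, F_{23})$ is the ideal of maximal minors of $\smat{x_1 & x_2 & x_3 \\ f_1 & f_2 & f_3}$, this gives the upper bound immediately.

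For the lower bound, I plan to rule out minimal primes of height $\leq 1$ by contradicting the minimality of $(\beta, S/I)$. Since $S$ is a UFD, such a prime is either $(0)$ or principal $(h)$ for some irreducible $h$. The case $I'=(0)$ forces $(f_1,f_2,f_3)=\lambda(x_1,x_2,x_3)$ for some $\lambda\in S$, so $\beta$ acts as multiplication by $\lambda$ on $\m$; hence $\beta(I)=\lambda I\equiv 0\pmod I$, contradicting $\dim\beta(I)\geq 2$ from Lemma~\ref{lem:genset}.

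In the principal case, I take $h=\gcd(F_{12},F_{13},F_{23})$ so that $G_{ij}:=F_{ij}/h$ have trivial gcd. Using that the matrix has rank $\leq 1$ modulo $h$, I will find $\lambda,\rho_1,\rho_2,\rho_3\in S$ with $f_i=\lambda x_i+h\rho_i$, so that $G_{ij}=x_i\rho_j-x_j\rho_i$. Setting $\tilde I:=I+(G_{12},G_{13},G_{23})$, the $S$-linear map $\tilde\beta\colon\m\to S/\tilde I$ given by $\tilde\beta(x_i):=\rho_i\bmod\tilde I$ is well-defined since $G_{ij}\in\tilde I$. The identity $\beta=\lambda\iota+h\tilde\beta$ on $\m$ (with $\iota\colon\m\to S/I$ the natural map) combined with $\lambda I\subseteq I$ yields $\dim\tilde\beta(\tilde I)\geq\dim\beta(I)\geq 2$, so $(\tilde\beta,S/\tilde I)$ is itself a counter-example; provided $\tilde I\supsetneq I$, this contradicts the minimality of $(\beta, S/I)$.

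The hard parts will be the degenerate sub-case $\tilde I=I$ (when every $G_{ij}$ already lies in $I$) and the existence of the lift $\lambda\in S$. I expect the former to be handled by iterating the reduction---each iteration produces a new counter-example whose associated minors have trivial gcd, eventually forcing a genuine dimension drop or falling into the already-handled height-$\geq 2$ regime---and the latter by a gcd argument in $S/(h)$ forcing the apparent scalar from $\operatorname{Frac}(S/(h))$ to be integral once $h\in\m$ is irreducible.
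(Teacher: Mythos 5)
Your upper bound (the classical height bound for maximal minors of a $2\times 3$ matrix) and your height-$0$ argument match the paper's proof. The real work is the height-$1$ case, and here your proposal both diverges from the paper and has two genuine gaps.

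First, the existence of a single $\lambda\in S$ with $f_i\equiv \lambda x_i \pmod h$ for all $i$ is the heart of the matter and is not proven. Your suggested gcd/fraction-field argument is problematic: $h=\gcd(F_{12},F_{13},F_{23})$ need not be irreducible, so $S/(h)$ may not be a domain and $\operatorname{Frac}(S/(h))$ may not exist; and even after passing to a single irreducible factor $p$ of $h$, the ring $S/(p)$ is a domain but generally not a UFD, so a ``gcd argument'' there is not available and the integrality of the apparent scalar $\mu\in\operatorname{Frac}(S/(p))$ does not follow for free. The paper settles exactly this point via Bertini's theorem: one chooses a generic linear form $y=\sum\lambda_ix_i$ making $(p,y)$ prime, deduces from $x_1f_y,\,x_2f_y\in(p,y)$ and $x_1,x_2\notin(p,y)$ that $f_y\in(p,y)$, and then unwinds to get $f_i=qx_i+r_ip$ with $q,r_i\in S$. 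This is precisely where the infinite-field hypothesis of the section enters, and it cannot be replaced by a bare gcd computation.

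Second, the degenerate sub-case $\tilde I=I$ is a real gap, and the proposed iteration is circular: if $\tilde I=I$, the new pair $(\tilde\beta,S/\tilde I)$ has the \emph{same} cyclic module $S/I$, so there is no dimension drop and the minimality hypothesis gives nothing. The paper avoids this entirely by descending at the level of \emph{modules} rather than enlarging the ideal: once $f_i=qx_i+r_ip$, one writes $\beta=\beta'+\beta''$ with $\beta'$ a coboundary (multiplication by $q$), replaces $\beta$ by $\beta''$, and observes that $\beta''$ factors through the proper cyclic submodule $(p)\subsetneq S/I$. Since $(p)\simeq S/J$ with $J=\ann_{S/I}(p)\supsetneq I$, one always has $\dim S/J<\dim S/I$, so minimality applies without any degenerate case. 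Your reduction to $S/\tilde I$ is not guaranteed to shrink the module; the paper's reduction to $S/J$ always does.
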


\begin{proof}
For convenience, let $L = (F_{12}, F_{13}, F_{23})$. 
As $F_{12}, F_{13}, F_{23}$ are the minors of the $2\times 3$ matrix
\[ 
\begin{pmatrix}x_1&x_2&x_3\\f_1&f_2&f_3 \end{pmatrix},
\]
the height of each minimal prime over $L$ is at most $2$. 
So assume that there is a minimal prime $\p$ over $L$ of height $\leq 1$.

If $\p$ has height $0$, then $\p = \{0\}$ and so $L=\{0\}$. Then $F_{ij} = 0$, so $x_if_j = x_jf_i$ for each $i,j$. From this it follows that there exists $q\in S$ such that $f_i=x_iq$ for all $i$. Thus, given any $h\in \m$, we have $\beta(h) = hq$. In particular, for $h\in I$ we see $\beta(h)=hq\in I$, so $\dim\beta(I) = 0$, which contradicts $\dim\beta(I)\geq2$. 

If $\p$ has height $1$, then there exists $p\in S$ irreducible with $ \p = (p)$. 
By Bertini's theorem, we know that for a generic linear combination $y=\sum_i\lambda_ix_i$, the ideal $(p,y)$ is prime. 
Choose $y$ so that $(p,y)$ is prime and so that (the open conditions) $\lambda_3\neq 0$ and $y\notin (p)$ are satisfied.
Let $f_y = \sum_i\lambda_i f_i$, and let $F_{1y} = x_1f_y-yf_1, F_{2y} = x_2f_y-yf_2$. 
Observe that $L =  ( F_{12}, F_{1y}, F_{2y})$. 



Next, since $F_{1y},F_{2y}\in L\subseteq (p)\subsetneq(p, y)$, we have $x_1f_y,x_2f_y\in (p, y)$. Recalling that $(p,y)$ is prime, we see $f_y\in (p, y)$, or both $x_1, x_2 \in (p, y)$. 
In the latter case, we have that $(x_1,x_2,x_3)=(x_1,x_2,y) \subseteq (p,y)$, which is impossible as the vanishing locus $V(p,y)\subseteq \mathbb{A}^3$ is irreducible of dimension at least $1$. 
We must therefore have $f_y\in (p, y)$.

Since $f_y\in(p,y)$, we have $f_y = qy + r_yp$ for some $q, r_y\in S$. Using that $p$ divides $F_{1y} = x_1f_y - yf_1=(x_1q-f_1)y+x_1r_yp$, we see that $p$ divides $(x_1q-f_1)y$. Since $(p)$ is prime and $y\notin p$, we have $x_1q-f_1\in(p)$ and so $f_1 = qx_1 + r_1p$ for some $r_1$. Similarly, $f_2 = qx_2 + r_2p$ for some $r_2$. As a result, $\beta=\beta'+\beta''$ where $\beta'(h)=qh$ for all $h\in\m$, and $\beta''(x_i)=pr_i$. By Remark \ref{rmk:equivalence of 2 inequalities for m mapsto M}, whether or not $(\beta,S/I)$ is a counter-example depends only on the value of $[\beta]\in\ext^1(S/\m,S/I)$ and since $[\beta']=0$, we can assume $\beta=\beta''$. As a result, we can assume the image of $\beta$ factors through $(p)\subsetneq S/I$.
Since $(p)$ is generated by a single element, we have $(p) \simeq S/J$ where $J = \ann(p)$. Since $\dim S/J<\dim S/I$, by our minimality assumption at the start of \S\ref{sec:reducing main theorem to special case}, we know that $\beta\colon\m\to (p)=S/J$ is not a counter-example, and so $\dim \beta(J) \leq 1$. But, $I \subseteq J$ because $I$ kills $p$. So, $\dim \beta(I) \leq \dim \beta(J) \leq 1$.
\end{proof}

\begin{corollary}
\label{cor:CM}
$S/(F_{12}, F_{13}, F_{23})$ is Cohen-Macaulay of Krull dimension $1$.
\end{corollary}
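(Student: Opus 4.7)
The plan is to deduce the corollary directly from Proposition \ref{prop:dim1} combined with a standard result on ideals of maximal minors.

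First I would establish the Krull dimension. By Proposition \ref{prop:dim1}, every minimal prime of $S$ over $(F_{12},F_{13},F_{23})$ has height $2$. Since $S$ is a polynomial ring in three variables over $k$, the quotient $S/(F_{12},F_{13},F_{23})$ has Krull dimension $3-2=1$, and in fact every associated prime of minimal dimension corresponds to a minimal prime of height $2$, so the quotient is equidimensional of dimension $1$.

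Next I would invoke the Hilbert--Burch theorem (equivalently the Eagon--Northcott complex for maximal minors). The ideal $(F_{12},F_{13},F_{23})$ is generated by the $2\times 2$ minors of the $2\times 3$ matrix
\[
\begin{pmatrix} x_1 & x_2 & x_3 \\ f_1 & f_2 & f_3 \end{pmatrix},
\]
and the expected (maximal) codimension for the ideal of maximal minors of a $2\times 3$ matrix is $3-2+1=2$. Proposition \ref{prop:dim1} shows that this expected codimension is actually achieved. By Hilbert--Burch (or equivalently, by the fact that the Eagon--Northcott complex is then a free resolution of length equal to the codimension), $S/(F_{12},F_{13},F_{23})$ is a perfect $S$-module of projective dimension $2$. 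Since $S$ is Cohen--Macaulay, a perfect module of projective dimension equal to its codimension is itself Cohen--Macaulay by the Auslander--Buchsbaum formula: $\mathrm{depth}\, S/(F_{12},F_{13},F_{23}) = \mathrm{depth}\, S - \mathrm{pd}_S(S/(F_{12},F_{13},F_{23})) = 3-2 = 1$, which equals the Krull dimension.

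There is no real obstacle here; the only thing to be careful about is citing the right form of the Hilbert--Burch/Eagon--Northcott statement, namely that an ideal of maximal minors of an $m \times n$ matrix ($m \leq n$) achieving the expected codimension $n-m+1$ is a perfect ideal, so its quotient is Cohen--Macaulay.
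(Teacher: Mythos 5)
Your proof is correct and takes essentially the same approach as the paper: both deduce the Krull dimension from Proposition \ref{prop:dim1} and then use the fact that an ideal of maximal minors achieving the expected codimension yields a Cohen--Macaulay quotient. The paper cites this directly (Eisenbud, Theorem 18.18, going back to Hochster--Eagon), while you unwind it through Hilbert--Burch/Eagon--Northcott together with the Auslander--Buchsbaum formula, but the key idea is identical.
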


\begin{proof}
By Proposition \ref{prop:dim1}, the variety cut out by $(F_{12}, F_{13}, F_{23})$ has codimension 2 in $\AA^3$, so $S/(F_{12}, F_{13}, F_{23})$ has Krull dimension $1$. Since $F_{12}, F_{13}, F_{23}$ are the $2\times 2$ minors of a $2\times 3$ matrix, we may apply \cite[Theorem 18.18]{Ei} (originally proven in \cite{HochsterEagon}) to see that $S/(F_{12}, F_{13}, F_{23})$ is Cohen-Macaulay.
\end{proof}

The following proposition establishes the existence of our desired $g,h\in\m$.

\begin{proposition}\label{prop:counterForm}
There exist $g,h\in I$ such that $g$ is a non-zero divisor in $S/(F_{12}, F_{13}, F_{23})$ and $\beta(g)$ and $\beta(h)$ are linearly independent. Furthermore, we necessarily have $h\notin(F_{12}, F_{13}, F_{23},g)$.
\end{proposition}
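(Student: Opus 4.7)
The plan is to combine prime avoidance with a genericity argument that controls $\beta$. By Corollary \ref{cor:CM}, the ring $S/(F_{12},F_{13},F_{23})$ is Cohen--Macaulay of pure Krull dimension $1$, so its associated primes $P_1,\dots,P_r$ are exactly its minimal primes, each of height $2$. Since $\sqrt{I}=\m$ has height $3$, no $P_j$ contains $I$; as each $P_j$ already contains the $F_{ij}$, this forces at least one generator $g_i \notin P_j$ for every $j$. By Lemma \ref{lem:genset}, the space $V := \beta(I) \subseteq \soc(S/I)$ is spanned by $\beta(g_1),\dots,\beta(g_s)$ and has dimension at least $2$.

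To produce $g$, I would take a generic $k$-linear combination $g = \sum_i c_i g_i$ of the $g_i$. For each $P_j$, the set of tuples $c = (c_i) \in k^s$ for which $g \in P_j$ forms a proper linear subspace of $k^s$, and the set for which $\beta(g) = 0$ also forms a proper subspace (since $\dim V \geq 2$ ensures that not all $\beta(g_i)$ vanish). Because $k$ is infinite, the union of these finitely many proper subspaces is not all of $k^s$, so one may choose $c$ outside all of them. The resulting element $g \in I$ avoids every associated prime of $(F_{12},F_{13},F_{23})$, hence is a non-zero divisor modulo that ideal, and it satisfies $\beta(g) \neq 0$. Using $\dim V \geq 2$, I would then pick $h$ to be any $g_i$ with $\beta(g_i) \notin k\,\beta(g)$; such an $i$ exists because the $\beta(g_i)$ span the $\geq 2$-dimensional space $V$, which strictly contains the line $k\,\beta(g)$. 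This yields $g, h \in I$ with $\beta(g)$ and $\beta(h)$ linearly independent.

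For the ``furthermore'', I would argue by contradiction: if $h = \sum_{i<j} a_{ij} F_{ij} + bg$ for some $a_{ij}, b \in S$, then applying the $S$-linear map $\beta$ and using $\beta(F_{ij})=0$ (established in the proof of Lemma \ref{lem:genset}) gives $\beta(h) = b\,\beta(g)$ in $S/I$. Since $\beta(g) \in \soc(S/I)$ is annihilated by $\m$, the action of $b \in S$ on $\beta(g)$ factors through the evaluation $S \to S/\m = k$, so $\beta(h) = b(0)\,\beta(g) \in k \cdot \beta(g)$, contradicting the linear independence of $\beta(g)$ and $\beta(h)$.

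The only delicate point is combining the two open conditions (avoiding $\bigcup_j P_j$ and keeping $\beta(g) \neq 0$) into a single generic choice of coefficients; this is precisely where the hypothesis that $k$ is infinite enters, and it becomes routine once phrased as avoiding a finite union of proper linear subspaces of $k^s$. Everything else is formal manipulation using $S$-linearity of $\beta$ together with the socle containment $\beta(I) \subseteq \soc(S/I)$ from Lemma \ref{lem:genset}.
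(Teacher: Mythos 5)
Your proof is correct and follows essentially the same strategy as the paper: use Cohen--Macaulayness so that the zero-divisors on $S/(F_{12},F_{13},F_{23})$ form the union of its finitely many minimal primes, use genericity over the infinite field $k$ to find $g\in I$ simultaneously avoiding those primes and satisfying $\beta(g)\neq 0$, and then exploit $\dim\beta(I)\geq 2$ together with $\beta(F_{ij})=0$ and $\beta(I)\subseteq\soc(S/I)$. The one notable difference is the packaging of the genericity step: you take $g$ directly as a generic $k$-linear combination of all the $g_i$, avoiding finitely many proper linear subspaces of $k^s$ at once, whereas the paper first produces a non-zero divisor $q\in I$ by prime avoidance and then perturbs it by a single scalar multiple $cg_i$ to also arrange $\beta(g)\neq 0$. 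Your one-step version is slightly more streamlined; both rest on the same ``finite union of proper subspaces cannot cover $k^s$'' fact. Your argument for the ``furthermore''---writing $h=\sum a_{ij}F_{ij}+bg$, applying $S$-linearity of $\beta$, and using that $\beta(g)\in\soc(S/I)$ so that $b\beta(g)=b(0)\beta(g)$---is a more explicit version of the paper's observation that $\beta(J)$ is the one-dimensional span of $\beta(g)$; the two are the same computation.
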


\begin{proof}



Recall our notation $I=(F_{12},F_{13},F_{23},g_1,\dots,g_s)$. 
Our first goal is to show that $I$ contains a non-zero divisor in $S/(F_{12}, F_{13}, F_{23})$. 

We know from Corollary \ref{cor:CM} that $S/(F_{12}, F_{13}, F_{23})$ is Cohen-Macaulay, so its associated primes are its minimal primes. Consequently, the set of zero divisors of $S/(F_{12}, F_{13}, F_{23})$ is the union of minimal primes over $(F_{12}, F_{13}, F_{23})$. 
If $I$ is contained in this union of minimal primes then, by the prime avoidance lemma, $I$ is contained in one of these minimal primes.
But this is impossible as $S/(F_{12}, F_{13}, F_{23})$ is Cohen-Macaulay of Krull dimension 1 by Corollary \ref{cor:CM}, and $S/I$ has Krull dimension $0$ by assumption.
Thus, $I$ is not contained in the union of minimal primes over $(F_{12}, F_{13}, F_{23})$ and so $I$ contains a non-zero divisor of $S/(F_{12}, F_{13}, F_{23})$.

Next, by Lemma \ref{lem:genset}, there exist $i\neq j$ such that $\beta(g_i)$ and $\beta(g_j)$ are linearly independent in $S/I$. 
Let $q\in I$ be a non-zero divisor of $S/(F_{12}, F_{13}, F_{23})$. If $c,d\in k\setminus\{0\}$ with $c\neq d$, and if $q+cg_i$ and $q+dg_i$ are in the same minimal prime over $(F_{12}, F_{13}, F_{23})$, then $q$ and $g_i$ are also in that minimal prime, a contradiction to $q$ being a non-zero divisor. Thus, since there are only finitely many minimal primes, 
we see that for all but finitely many $c\in k$, the polynomial $q+cg_i$ is a non-zero divisor. 

Since $q\in I$, we know from Lemma \ref{lem:genset} that $\beta(q)\in\soc(S/I)$. Then $\beta(g_i)$ and $\beta(q)$ are elements of the vector space $\soc(S/I)$ which has dimension at least 2 and $\beta(g_i)\neq0$, so for infinitely many $c\in k$, we see $\beta(q+cg_i)=\beta(q)+c\beta(g_i)\neq 0$. Combining this with our conclusion from the previous paragraph that $q+cg_i$ is a non-zero divisor for all but finitely many $c\in k$, we see we can find a non-zero divisor $g:=q+cg_i\in I$ such that $\beta(g)\neq 0$. Now choose $h\in \m$ to be any linear combination of $g_i$ and $g_j$ such that $\beta(h)$ is not a scalar multiple of $\beta(g)$; this is possible by Lemma \ref{lem:genset} as $\beta(g_i)$ and $\beta(g_j)$ span a 2-dimensional subspace of $\soc(S/I)$.

Lastly, we show $h\notin J:=(F_{12}, F_{13}, F_{23},g)$. From Lemma \ref{lem:genset}, we know $0\neq\beta(g)\in\soc(S/I)$ and we see that $\beta(F_{ij})=0$. Thus, $\beta(J)$ is 1-dimensional, generated by $\beta(g)$. Since $\beta(g)$ and $\beta(h)$ are linearly independent, it follows that $h\notin J$.
\end{proof}

\begin{remark}[Hypothesis that $k$ is infinite]
Proposition \ref{prop:counterForm} is the only step in the proof of Theorem \ref{thm:main} that assumes that $k$ is infinite.
\end{remark}

With the choice of $g$ and $h$ from Proposition \ref{prop:counterForm}, we have:
\begin{corollary}\label{cor:conditions12}
Conditions (\ref{item::dimSJ}) and (\ref{item::lift-beta-SJ}) of Theorem \ref{thm:main-special-case} hold.
\end{corollary}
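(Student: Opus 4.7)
The plan is to verify each condition of Theorem \ref{thm:main-special-case} directly from what Proposition \ref{prop:counterForm} and Corollary \ref{cor:CM} have already established, using only the compatibility between $\beta$ and its lift $\beta'$.

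For condition (\ref{item::dimSJ}), I would take $h\notin J$ for free, since it is precisely the last sentence of Proposition \ref{prop:counterForm}. To obtain $\m\in\supp(S/J)$, I would invoke Proposition \ref{prop:furtherReduction} to place every $f_i$ in $\m$, which forces each $F_{ij}=x_if_j-x_jf_i$ into $\m^2$; combined with $g\in I\subseteq\m$, this gives $J\subseteq\m$, hence $\m\in V(J)=\supp(S/J)$. Finite-dimensionality of $S/J$ over $k$ then follows because $g$ was chosen in Proposition \ref{prop:counterForm} to be a non-zero divisor in $S/(F_{12},F_{13},F_{23})$, which by Corollary \ref{cor:CM} is Cohen--Macaulay of Krull dimension one; killing a non-zero divisor drops the Krull dimension by one, so $S/J$ is Artinian.

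For condition (\ref{item::lift-beta-SJ}), I would first spend a line confirming that $\beta'\colon\m\to S/J$ is well-defined: the syzygies among $x_1,x_2,x_3$ as $S$-generators of $\m$ are the Koszul relations, and these translate into the vanishing of $x_if_j-x_jf_i=F_{ij}$ in $S/J$, which holds since $F_{ij}\in J$. Next, because $F_{ij},g,h\in I$, we have $J+\langle h\rangle\subseteq I$, so there is a canonical $S$-linear surjection $S/(J+\langle h\rangle)\twoheadrightarrow S/I$. The ring $S/I$ is already $\m$-local since $\sqrt{I}=\m$, so this surjection descends to a $k$-linear surjection
\[
\pi\colon (S/(J+\langle h\rangle))_\m \twoheadrightarrow S/I,
\]
and by construction $\pi$ sends $\beta'(x_i)$ to $\beta(x_i)$. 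By $S$-linearity of $\beta'$ and $\beta$, it follows that $\pi(\beta'(g))=\beta(g)$ and $\pi(\beta'(h))=\beta(h)$.

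Since $\beta(g)$ and $\beta(h)$ were chosen in Proposition \ref{prop:counterForm} to be $k$-linearly independent in $S/I$, their preimages $\beta'(g)$ and $\beta'(h)$ must be $k$-linearly independent in the larger $k$-vector space $(S/(J+\langle h\rangle))_\m$, which finishes condition (\ref{item::lift-beta-SJ}). I do not foresee any real obstacle here: the corollary amounts to repackaging work already done, and the only mildly delicate moment is checking that localization at $\m$ interacts correctly with the surjection onto $S/I$, which is automatic because $S/I$ is already local at $\m$.
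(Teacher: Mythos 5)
Your proposal is correct and follows essentially the same route as the paper: finite-dimensionality of $S/J$ comes from $g$ being a non-zerodivisor on the one-dimensional Cohen--Macaulay ring $S/(F_{12},F_{13},F_{23})$, and linear independence of $\beta'(g),\beta'(h)$ is pulled back along the localized surjection onto $S/I$. The only cosmetic differences are that you verify $\m\in\supp(S/J)$ via $J\subseteq\m$ rather than via the surjection $S/J\twoheadrightarrow S/I$ already at hand (the paper's route, which you in any case set up for condition (\ref{item::lift-beta-SJ})), and you explicitly record the Koszul-relations check that $\beta'$ is well-defined, which the paper leaves implicit.
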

\begin{proof}
From Proposition \ref{prop:counterForm}, $g$ is a non-zero divisor of $S/(F_{12},F_{13},F_{23})$, which is Cohen-Macaulay of Krull dimension 1 by Corollary \ref{cor:CM}. Thus, $S/J=S/(F_{12},F_{13},F_{23},g)$ has Krull dimension 0, so it is finite-dimensional over $k$. Since $S/J$ surjects onto $S/I$ and $\sqrt{I}=\m$, we know that $\m\in\supp(S/J)$, which proves condition (\ref{item::dimSJ}).

Next since $h\in I$, we have surjections $S/J\to S/(J+\<h\>)\to S/I$. After localizing at $\m$, these remain surjections. Since $\sqrt{I}=\m$, we know $S_\m/I_\m=S/I$ and so 
\[
\m\stackrel{\beta'}{\longrightarrow} S/J\to S_\m/J_\m\to S_\m/(J+\<h\>)_\m
\]
is a lift of $\beta$, meaning that after post-composing the above map by $S_\m/(J+\<h\>)_\m\to S_\m/I_\m=S/I$, we obtain $\beta$. Since $\beta(g)$ and $\beta(h)$ are linearly independent in $S/I$, it must also be the case that $\beta'(g)$ and $\beta'(h)$ are linearly independent in $S_\m/(J+\<h\>)_\m$, proving condition (\ref{item::lift-beta-SJ}).
\end{proof}

\subsection{Computing the socle}
\label{subsec:socle}
In this subsection we compute the socle of $S_\m/J_\m$, thereby showing condition (\ref{item::socSJ2}) of Theorem \ref{thm:main-special-case} and finishing the proof.

\begin{proposition}
\label{prop:dimSocSJ}
$\dim\soc(S_\m/J_\m)=2$.
\end{proposition}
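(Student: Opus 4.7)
My plan is to produce an explicit minimal free resolution of $S_\m/J_\m$ over $S_\m$ of length $3$ whose top Betti number is $2$, and then identify this top Betti number with $\dim_k \soc(S_\m/J_\m)$ via a Koszul computation.

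First, Corollary \ref{cor:CM} shows that $R:=S/(F_{12},F_{13},F_{23})$ is Cohen--Macaulay of codimension $2$, so the Hilbert--Burch theorem supplies a free resolution
$$0\to S^2\xrightarrow{\phi} S^3\to S\to R\to 0,$$
where (up to signs) $\phi=\smat{x_1 & f_1 \\ x_2 & f_2 \\ x_3 & f_3}$ is the matrix whose maximal minors are the $F_{ij}$. By Proposition \ref{prop:furtherReduction} each $f_i\in\m$, and of course each $x_i$ and each $F_{ij}$ also lies in $\m$; every entry of every differential is therefore in $\m$, so this resolution remains minimal after localization at $\m$.

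Next, Proposition \ref{prop:counterForm} guarantees that $g\in\m$ is a non-zero divisor on $R$, and I would pass to a resolution of $R_\m/gR_\m=S_\m/J_\m$ by forming the mapping cone of the chain map on the localized resolution given by multiplication by $g$ in each degree. Since $g$ is a non-zero divisor, the cone has vanishing higher homology, yielding a length-$3$ resolution
$$0\to S_\m^2\to S_\m^2\oplus S_\m^3\to S_\m^3\oplus S_\m\to S_\m\to S_\m/J_\m\to 0.$$
Each entry of each differential is an entry of $\phi$, one of the $F_{ij}$, or the element $g$---all in $\m_{S_\m}$---so this resolution is minimal, and in particular its top Betti number equals $2$.

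Finally, to identify this top Betti number with the socle dimension, I would tensor the above minimal resolution with the Koszul complex on $x_1,x_2,x_3$ resolving the residue field $k$. A direct inspection gives the standard identification
$$\Tor^{S_\m}_3(S_\m/J_\m,k)\cong\soc(S_\m/J_\m),$$
while minimality of the resolution yields $\dim_k\Tor^{S_\m}_3(S_\m/J_\m,k)=2$. The equality $\dim\soc(S_\m/J_\m)=2$ follows.

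The main subtlety in this approach is ensuring minimality of the mapping cone resolution, which reduces immediately to the observation that $x_i,f_i,g\in\m$; the identification of the top Tor module with the socle is a routine Koszul calculation relying on the fact that $S_\m$ is regular local of dimension $3$.
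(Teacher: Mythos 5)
Your proposal is correct and follows essentially the same route as the paper: you use Corollary~\ref{cor:CM} to obtain the length-two resolution of $S/(F_{12},F_{13},F_{23})$ (you cite Hilbert--Burch where the paper cites Eagon--Northcott, but for a $2\times 3$ matrix these give the same complex), then take the mapping cone on multiplication by the non-zero divisor $g$, observe that every entry lies in $\m$ so the localized resolution is minimal, and read off $\dim\soc(S_\m/J_\m)$ as the top Betti number. The paper writes out the matrices $A'$, $B'$, $C'$ of the cone explicitly and asserts the final identification with the socle without comment, whereas you describe the cone abstractly and make explicit the standard Koszul-duality fact $\Tor^{S_\m}_3(S_\m/J_\m,k)\cong\soc(S_\m/J_\m)$; these are presentational differences only.
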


\begin{proof}
From Corollary \ref{cor:CM} we know that $S/(F_{12}, F_{13}, F_{23})$ is Cohen-Macaulay of dimension 1. Thus, the Eagon-Northcott complex yields a minimal free resolution \cite[Theorem A2.60]{Ei2}:
\[
 0\rightarrow S^2\xrightarrow{A} S^3 \xrightarrow{B} S \rightarrow S/\langle F_{12}, F_{13}, F_{23}\rangle \rightarrow 0,~~~A = \begin{pmatrix} x_3&f_3\\ x_2&f_2\\ x_1&f_1 \end{pmatrix}, B = \begin{pmatrix} F_{12}& -F_{13}& F_{23}\end{pmatrix}
\]
is an exact sequence.

Since $g\in \frak{m}$ is a nonzerodivisior in $S/(F_{12}, F_{13}, F_{23})$, we can use the above resolution to obtain the minimal free resolution of $S/J$:
\[
 0\rightarrow S^2\xrightarrow{A'} S^5 \xrightarrow{B'} S^4 \xrightarrow{C'} S \rightarrow S/J \rightarrow 0,\]
 \[~~~A' = \begin{pmatrix} g&0\\0&g\\ x_3&f_3\\ x_2&f_2\\ x_1&f_1 \end{pmatrix},~~ B' = \begin{pmatrix}x_3&f_3&-g&0&0\\x_2&f_2&0&-g&0\\x_1&f_1&0&0&-g\\ 0&0&F_{12}&-F_{13}&F_{23} \end{pmatrix}, ~~
 C' = \begin{pmatrix} F_{12}& -F_{13}& F_{23}&g\end{pmatrix}
\]
Localizing at $\frak{m}$, we obtain the minimal free resolution
\[
0\rightarrow S_{\frak{m}}^2\rightarrow S_{\frak{m}}^5\rightarrow S_{\frak{m}}^4\rightarrow S_{\frak{m}}\rightarrow (S/J)_{\frak{m}}\rightarrow 0.
\]
Consequently, $\dim\soc(S_\m/J_\m)=2$, as the left-most term in the above resolution is of rank $2$.
\end{proof}



\section{Completing the proof of Theorem \ref{thm:summaryTheorem}}
\label{sec:main-thm}
Having now proved Theorem \ref{thm:main-special-case}, we have reduced Theorem \ref{thm:main} 
to showing the following. Let $f_1,f_2,f_3,g\in\m$, $F_{ij}=x_if_j-x_jf_i$, and $J=(F_{12},F_{13},F_{23},g)$ such that $S/J$ is finite-dimensional with $\dim\soc(S_\m/J_\m)=2$. Let $\beta\colon\m\to S/J$ be defined by $\beta(x_i)=f_i$. Then it is impossible to find $h\in\m\setminus J$ such that $\beta(g)$ and $\beta(h)$ are linearly independent in $S_\m/(J_\m+\<h\>)$.

We begin with two well-known lemmas.
\begin{lemma}
\label{l:Gor-ann-int}
If $R$ is an Artinian Gorenstein local ring, and $I_1$ and $I_2$ are ideals of $R$, then $\ann(I_1) + \ann(I_2) = \ann(I_1 \cap I_2)$.
\end{lemma}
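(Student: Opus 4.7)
The plan is to prove the lemma by reducing it, via the identity $\ann(J_1+J_2)=\ann(J_1)\cap\ann(J_2)$, to the double-annihilator property $\ann(\ann(I))=I$, which is the characteristic feature of Artinian Gorenstein local rings.

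First, I would record the easy inclusion $\ann(I_1)+\ann(I_2)\subseteq\ann(I_1\cap I_2)$: any element killing $I_1$ or $I_2$ kills $I_1\cap I_2$, so this follows directly from $I_1\cap I_2\subseteq I_j$.

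Next, I would establish the double-annihilator identity. Because $R$ is Artinian Gorenstein local, $R$ is injective as a module over itself (equivalently, its socle is one-dimensional over the residue field). Applying the exact functor $\hom_R(-,R)$ to $0\to I\to R\to R/I\to 0$ and using the natural isomorphism $\hom_R(R/I,R)\cong\ann_R(I)$, one obtains $\hom_R(I,R)\cong R/\ann(I)$. Since $\hom_R(-,R)$ is an exact, length-preserving duality on finite-length $R$-modules, this gives $\mathrm{length}(R/\ann(I))=\mathrm{length}(I)$, hence $\mathrm{length}(\ann(\ann(I)))=\mathrm{length}(I)$. Combined with the tautological containment $I\subseteq\ann(\ann(I))$, this forces $I=\ann(\ann(I))$.

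Finally, I would put the pieces together. The general identity $\ann(J_1+J_2)=\ann(J_1)\cap\ann(J_2)$, valid in any commutative ring, applied with $J_i=\ann(I_i)$ yields
\[
\ann\bigl(\ann(I_1)+\ann(I_2)\bigr)=\ann(\ann(I_1))\cap\ann(\ann(I_2))=I_1\cap I_2,
\]
using the double-annihilator identity in the last step. Applying $\ann(-)$ to both sides and invoking the double-annihilator identity once more gives $\ann(I_1)+\ann(I_2)=\ann(I_1\cap I_2)$, as desired.

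The only nontrivial ingredient is the double-annihilator identity, and this is standard Matlis-duality machinery for zero-dimensional Gorenstein rings; once it is in hand the lemma is a short formal manipulation, so I do not anticipate a serious obstacle.
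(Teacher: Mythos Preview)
Your proof is correct and follows essentially the same route as the paper: both use the identity $\ann(K_1+K_2)=\ann(K_1)\cap\ann(K_2)$ with $K_j=\ann(I_j)$, invoke the double-annihilator property $\ann(\ann(I))=I$ of Artinian Gorenstein local rings, and then take annihilators once more. The paper simply cites the double-annihilator property (to \cite[Exercise 3.2.15]{CM-rings}) rather than sketching its proof, and it omits your preliminary easy inclusion since the final equality makes it redundant.
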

\begin{proof}
In any commutative ring, we have the equality $\ann(K_1 + K_2) = \ann(K_1) \cap \ann(K_2)$ for all ideals $K_1$ and $K_2$. For Artinian Gorenstein local rings, we have $\ann(\ann(K))=K$ for all ideals $K$, see \emph{e.g.}~\cite[Exercise 3.2.15]{CM-rings}. So letting $K_j=\ann(I_j)$, we see in our case that $\ann( \ann(I_1)  + \ann(I_2) ) = I_1 \cap I_2$. Taking annihilators of both sides then proves the result.
\end{proof}

\begin{lemma}
\label{l:quot--->relation-in-soc}
Let $R$ be any Artinian local ring and $0\neq r\in R$. If $s_1,\dots,s_m$ form a basis for $\soc(R)$, then there is a linear dependence relation among the $s_i$ in $R/r$.
\end{lemma}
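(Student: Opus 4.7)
The plan is to reinterpret the desired linear dependence in terms of the ideal $(r)$. Let $k = R/\m$ denote the residue field; since $\m \cdot \soc(R) = 0$, the socle is naturally a $k$-vector space and the $s_i$ form a $k$-basis. A non-trivial linear dependence $\sum c_i s_i = 0$ in $R/(r)$ (with $c_i \in k$ not all zero) is well-defined because any two lifts of $c_i \in k$ to $R$ differ by an element of $\m$, and $\m$ kills $\soc(R)$. Hence the lemma reduces to showing that $\soc(R) \cap (r) \neq 0$: from any non-zero element of that intersection, write it in the basis $\{s_i\}$ to obtain the required dependence.

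To establish $\soc(R) \cap (r) \neq 0$, I would use the standard fact that in an Artinian local ring, every non-zero ideal meets the socle non-trivially. The argument is brief: since $R$ is Artinian local, the maximal ideal $\m$ is nilpotent, so the descending chain
\[
(r) \supseteq \m(r) \supseteq \m^2(r) \supseteq \cdots
\]
eventually reaches $0$. Let $n$ be maximal with $\m^n(r) \neq 0$ (which exists because $r \neq 0$ gives $n = 0$ as a candidate, and $\m^N = 0$ provides an upper bound). Then $\m \cdot \m^n(r) = \m^{n+1}(r) = 0$, so $\m^n(r) \subseteq \soc(R)$, while $\m^n(r) \subseteq (r)$ is automatic; this exhibits a non-zero element of $\soc(R) \cap (r)$.

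There is no real obstacle here, as the result is essentially standard. The only point requiring a sentence of care is the well-definedness of $\sum c_i s_i$ as an element of $R$ (and hence of $R/(r)$) when the coefficients are taken from the residue field $k$, which follows from $\m \cdot \soc(R) = 0$ as noted.
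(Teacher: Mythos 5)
Your proof is correct and follows essentially the same route as the paper: both arguments reduce the lemma to the fact that in an Artinian local ring every non-zero ideal meets the socle non-trivially, then expand a non-zero element of $(r)\cap\soc(R)$ in the basis $\{s_i\}$ to get the dependence relation. The only difference is that you spell out the proof of that standard fact (via the chain $\m^n(r)$) and remark on the well-definedness of the coefficients, whereas the paper cites the fact without proof.
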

\begin{proof}
Since every non-zero ideal of $R$ intersects the socle non-trivially, $(r)\cap\soc(R)$ contains a non-trivial element $s$. We can write $s=\sum a_i s_i$ with $(a_1,\dots,a_m)\in k^m\setminus 0$. Then in $R/r$ we have the linear dependence relation $\sum a_i s_i=0$.
\end{proof}

\begin{proposition}
\label{prop:Gor-divisibility-property}
Let $K\subseteq S=k[x_1,x_2,x_3]$ be an ideal with $S/K$ an Artinian Gorenstein local ring, and $\gamma\colon\m\to S/K$ an $S$-module map. If $\gamma(q)$ is divisible by $q$ for all $q\in\m$, then there exists $r\in S/K$ such that for all $q\in\m$, we have $\gamma(q)=qr$.
\end{proposition}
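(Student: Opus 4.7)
The plan is to use the divisibility hypothesis to descend $\gamma$ from an $S$-module map on $\m$ to an $R$-linear map on $\bar\m := \m/K$, viewed as the maximal ideal of $R := S/K$, and then extend along the inclusion $\bar\m \hookrightarrow R$ using self-injectivity of the Artinian Gorenstein local ring $R$; such an extension is automatically multiplication by some $r \in R$.

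First I would observe that $\gamma$ vanishes on $K$. Note $K \subseteq \m$ since $R$ is local with maximal ideal $\m/K$. For $k \in K$, the divisibility hypothesis produces some $s \in R$ with $\gamma(k) = \bar{k}\cdot s$, but $\bar k = 0$ in $R$, so $\gamma(k) = 0$. Therefore $\gamma$ descends to a well-defined map $\bar\gamma \colon \bar\m \to R$. This map is automatically $R$-linear, not merely $S$-linear: for $\bar s \in R$ with lift $s \in S$ and $\bar q \in \bar\m$ with lift $q \in \m$, we compute
\[
\bar\gamma(\bar s \cdot \bar q) \;=\; \gamma(sq) \;=\; s\,\gamma(q) \;=\; \bar s \cdot \bar\gamma(\bar q),
\]
using the $S$-linearity of $\gamma$ together with the fact that $K$ annihilates $R$.

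Second, I would invoke the standard characterization that an Artinian Gorenstein local ring is injective as a module over itself (equivalent in the 0-dimensional case to the condition $\dim_k \soc R = 1$ already used elsewhere in the paper). Applying this to the inclusion of $R$-modules $\bar\m \hookrightarrow R$, the map $\bar\gamma$ lifts to an $R$-linear map $\tilde\gamma \colon R \to R$. Setting $r := \tilde\gamma(1)$, $R$-linearity forces $\tilde\gamma(a) = ar$ for every $a \in R$, and thus for every $q \in \m$,
\[
\gamma(q) \;=\; \bar\gamma(\bar q) \;=\; \tilde\gamma(\bar q) \;=\; \bar q \cdot r \;=\; qr,
\]
as desired.

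I do not expect a real obstacle: the divisibility hypothesis is strong enough that it immediately kills $K$, and the only step demanding care is verifying that the descended map $\bar\gamma$ is $R$-linear (which in turn requires only that $K$ acts as zero on both source and target). A more hands-on alternative would choose $r_i \in R$ with $\gamma(x_i) = x_i r_i$ and patch them into a single $r$ by solving the congruences $r \equiv r_i \pmod{\ann_R(x_i)}$, using Lemma \ref{l:Gor-ann-int} to supply the CRT-style pairwise compatibility $\ann(x_i) + \ann(x_j) = \ann((x_i) \cap (x_j))$; however, the self-injectivity argument above sidesteps this bookkeeping entirely.
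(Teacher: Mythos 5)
Your main argument is correct and takes a genuinely different route from the paper's. Both proofs use the divisibility hypothesis only to force $\gamma(K)=0$ (the paper does so implicitly, repeatedly constructing elements $s\in\m$ that vanish in $S/K$ and noting that $\gamma(s)$ is divisible by $s=0$), but they then exploit the Gorenstein hypothesis through different characterizations. You invoke that the Artinian Gorenstein local ring $R=S/K$ is injective over itself: once $\gamma$ descends to an $R$-linear $\bar\gamma\colon\bar\m\to R$, self-injectivity extends it along $\bar\m\hookrightarrow R$ to a map $R\to R$, necessarily multiplication by $\tilde\gamma(1)$. The paper instead works with the characterization $\ann(\ann(I))=I$ (via Lemma~\ref{l:Gor-ann-int}) and runs an explicit variable-by-variable normalization of the $p_i$, the crux being the computations that $p_2\in\ann(x_2)+\ann(x_3)$ and $p_1\in\ann(x_1)+\ann(x_2,x_3)$. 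Your route is shorter, more conceptual, and makes it transparent that the result is independent of the number of variables and of $\m$ being the irrelevant maximal ideal. One caveat about the closing ``hands-on alternative'' you sketch: it is too optimistic as stated. From $x_j\gamma(x_i)=x_i\gamma(x_j)$ one only gets $r_i-r_j\in\ann(x_ix_j)$, which may be strictly larger than $\ann(x_i)+\ann(x_j)=\ann\bigl((x_i)\cap(x_j)\bigr)$; establishing the finer membership needed for any CRT-style patching requires bringing the divisibility hypothesis back into play, and this is essentially the computational content of the paper's proof. That is precisely the work your self-injectivity argument sidesteps, so the two routes are less interchangeable than your last paragraph suggests.
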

\begin{proof}
For every $r\in S/K$, let $\delta_r\colon\m\to S/K$ be the map $\delta_r(q)=qr$. To prove the result, it suffices to replace $\gamma$ by $\gamma-\delta_r$ for any $r$. Let $\gamma(x_i)=x_ip_i$. Replacing $\gamma$ by $\gamma-\delta_{p_3}$, we can assume $p_3=0$, i.e.~$\gamma(x_3)=0$. Then $x_1x_3p_1=x_3\gamma(x_1)=x_1\gamma(x_3)=0$. In other words,
\[
p_1\in\ann(x_1x_3).
\]
Similarly,
\[
p_2\in\ann(x_2x_3)\quad\textrm{and}\quad p_1-p_2\in\ann(x_1x_2).
\]

Our first goal is to show that $p_2\in\ann(x_2)+\ann(x_3)$. By Lemma \ref{l:Gor-ann-int}, we have $\ann(x_2)+\ann(x_3)=\ann((x_2)\cap(x_3))$. So, let $q\in(x_2)\cap(x_3)$ and we must show $p_2q=0$. Since $q$ is divisible by both $x_2$ and $x_3$, we can write $q=x_3q'$ and $q=x_2q''$. We can further write $q''=a(x_2)+x_3b(x_2,x_3)+x_1c(x_1,x_2,x_3)$ where $a\in k[x_2]$, $b\in k[x_2,x_3]$, and $c\in k[x_1,x_2,x_3]$. Then using $p_2\in\ann(x_2x_3)$ and $p_1-p_2\in\ann(x_1x_2)$, we have
\[
\begin{split}
p_2q = p_2x_2a(x_2)+p_2x_1x_2c(x_1,x_2,x_3) &= p_2x_2a(x_2)+p_1x_1x_2c(x_1,x_2,x_3)\\
&= \gamma(x_2a(x_2)+x_1x_2c(x_1,x_2,x_3)+x_3d)
\end{split}
\]
for any choice of $d$. Choosing $d=x_2b(x_2,x_3)-q'$, we see
\[
s:=x_2a(x_2)+x_1x_2c(x_1,x_2,x_3)+x_3d=q-x_3q'=0.
\]
But, by assumption $p_2q=\gamma(s)$ is divisible by $s=0$, and hence $p_2q=0$. We have therefore shown
\[
p_2\in\ann(x_2)+\ann(x_3).
\]

Thus, we can write $p_2=(p_2-r)+r$ with $r\in\ann(x_3)$ and $p_2-r\in\ann(x_2)$. Then $(\gamma-\delta_r)(x_2)=x_2(p_2-r)=0$ and $(\gamma-\delta_r)(x_3)=-x_3r=0$, and so we can assume
\[
p_2=p_3=0.
\]
Then
\[
p_1\in\ann(x_1x_2)\cap\ann(x_1x_3).
\]
To finish the proof we need only show that $p_1\in\ann(x_1)+\ann(x_2,x_3)$. Indeed, upon doing so, we can write $p_1=(p_1-r)+r$ with $r\in\ann(x_2,x_3)$ and $p_1-r\in\ann(x_1)$. Then $(\gamma-\delta_r)(x_1)=x_1(p_1-r)=0$, $(\gamma-\delta_r)(x_2)=-x_2r=0$, and $(\gamma-\delta_r)(x_3)=-x_3r=0$. In other words, we will have found $r\in S$ such that $\gamma-\delta_r=0$, i.e.~$\gamma(q)=qr$ for all $q\in\m$.

To show that $p_1\in\ann(x_1)+\ann(x_2,x_3)$, and thereby finish the proof, we again note by Lemma \ref{l:Gor-ann-int} that $\ann(x_1)+\ann(x_2,x_3)=\ann((x_1)\cap(x_2,x_3))$. We let $q\in(x_1)\cap(x_2,x_3)$ and must show that $p_1q=0$. We can then write $q=x_1(a(x_1)+x_2b(x_1,x_2)+x_3c(x_1,x_2,x_3))$ and $q=x_2q'+x_3q''$. Then using that $p_1\in\ann(x_1x_2)\cap\ann(x_1x_3)$, we have
\[
p_1q=p_1x_1a(x_1)=\gamma(x_1a(x_1)+x_2d + x_3e)
\]
for any choice of $d$ and $e$. As before, choosing $d=x_1b(x_1,x_2)-q'$ and $e=x_1c(x_1,x_2,x_3)-q''$ yields $s:=x_1a(x_1)+x_2d + x_3e=0$, and since $p_1q=\gamma(s)$ is divisible by $s=0$, we have $p_1q=0$.
\end{proof}

Finally, we turn to the proof of the main theorem.

\begin{proof}[Proof of Theorem \ref{thm:main}]
Since $S/J$ is finite-dimensional over $k$, we know that $S_\m/J_\m=S/K$ for some ideal $K$. Recall that $\dim\soc(S/K)=2$ and our goal is to show that for all $h\in\m\setminus J$ there is a linear dependence relation between $\beta(g)$ and $\beta(h)$ in $S_\m/(J_\m+\<h\>)=S/(K+\<h\>)$. In particular, we may assume $\beta(g)\neq0$ in $S/(K+\<h\>)$.

To begin, we show $\beta(h)\notin\soc(S/K)$. If $\beta(h)$ were in the socle, then since $\dim\soc(S/K)=2$ and $\beta(g)\in\soc(S/K)$, either we have our desired linear dependence relation between $\beta(g)$ and $\beta(h)$ in $S/K$ (and hence in $S/(K+\<h\>)$), or $\beta(g)$ and $\beta(h)$ form a basis for $\soc(S/K)$. In the latter case, Lemma \ref{l:quot--->relation-in-soc} shows there is a linear dependence relation between $\beta(g)$ and $\beta(h)$ in $S/(K+\<h\>)$. So, we have shown our claim that $\beta(h)\notin\soc(S/K)$. Further note that $h\in\soc(S/K)$ implies $\beta(h)\in\soc(S/K)$, since $x_i\beta(h)=h\beta(x_i)\in h\m=0$. So, we conclude
\[
h,\beta(h)\notin\soc(S/K).
\]

Next, notice that $\beta(h)$ does not divide $\beta(g)$ in $S/K$. Indeed, suppose to the contrary that $\beta(g)=q\beta(h)$ with $q\in S$. If $q\in k$, then we have a linear dependence relation in $S/K$ and hence in $S/(K+\<h\>)$. If $q\in\m$, then $\beta(g)=q\beta(h)=h\beta(q)\in \<h\>$ so $\beta(g)=0$ in $S/(K+\<h\>)$, which is again a linear dependence relation. This shows our claim that
\[
\beta(g)\notin S\beta(h).
\]
Now, $S\beta(h)$ is an ideal of $S/K$, so it intersects $\soc(S/K)$ non-trivially. Since $\dim\soc(S/K)=2$, we know $S\beta(h)\cap\soc(S/K)$ has dimension 1 or 2, but $\beta(g)\notin S\beta(h)\cap\soc(S/K)$, and so $S\beta(h)\cap\soc(S/K)$ is 1-dimensional. Let $q_0\in S$ such that $q_0\beta(h)$ is a basis vector for $S\beta(h)\cap\soc(S/K)$. Then
\[
\beta(g)\textrm{\ \,and\ \,}q_0\beta(h)\textrm{\ \,form\ a\ basis\ for\ }\soc(S/K).
\]

Since $h\notin\soc(S/K)$, we can induct on the smallest $\ell$ for which $\m^\ell h\in\soc(S/K)$. 
That is, we can assume the result for $qh$ for all $q\in\m$, i.e.~we can assume that $\beta(g)$ and $\beta(qh)$ are linearly dependent in $S/(K+\<qh\>)$ for all $q\in\m$. So for all $q\in\m$, there exists $p\in S/K$ and $a,b\in k$ such that $(a,b)\neq (0,0)$ and
\[
a\beta(g)+b\beta(qh)=pqh.
\]
Note that $\beta(qh)=h\beta(q)\in\<h\>$, so the above equality shows $a\beta(g)\in \<h\>$. This yields our desired linear dependence relation among $\beta(g)$ and $\beta(h)$ in $S/(K+\<h\>)$ unless $a=0$, in which case after rescaling we can assume $b=1$. We can therefore assume that
\[
\beta(qh)\in Sqh\ \ \forall q\in\m.
\]

Next, let
\[
\gamma\colon\m\to \<h\>\subseteq S/K,\quad \gamma(q)=\beta(qh).
\]
Since $\<h\>\cap\soc(S/K)$ is non-trivial, it has dimension 1 or 2. If $\beta(g)$ is in this intersection, then we have our desired linear dependence relation among $\beta(g)$ and $\beta(h)$ in $S/(K+\<h\>)$, so we can assume this is not the case. Thus, $\<h\>\cap\soc(S/K)$ does not contain $\beta(g)$, so it is 1-dimensional, and hence $\<h\>$ is Gorenstein. Notice that $\<h\>\simeq S/\ann_{S/K}(h)$ and via this identification, the condition $\beta(qh)\in \<qh\>$ is equivalent to the condition that $q$ divides $\gamma(q)$. Applying Proposition \ref{prop:Gor-divisibility-property}, there is $r\in S/\ann_{S/K}(h)$ such that $\gamma(q)=qr$. Translating this back into a statement about $\<h\>$ via our identification with $S/\ann(h)$, this says
\[
\exists\, r\in S\textrm{\ \,such\ that\ \,}\beta(qh)=qhr\ \ \forall q\in\m.
\]
As a result, $\beta(h)-hr\in\ann(\m)=\soc(S/K)$, and so there exist $a,b\in k$ such that
\[
\beta(h)-hr=a\beta(g)+bq_0\beta(h).
\]
If $q_0\in\m$, then we see $\beta(h)-a\beta(g)=h(b\beta(q_0)+r)=0$ in $S/(K+\<h\>)$ and gives our linear dependence relation. So, $\q_0\notin\m$, i.e.~$q_0$ is a unit. By construction $\beta(g)$ and $q_0\beta(h)$ form a basis for $\soc(S/K)$, and $q_0$ is a unit, so $\beta(g)$ and $\beta(h)$ form a basis for $\soc(S/K)$. Then by Lemma \ref{l:quot--->relation-in-soc}, they have a linear dependence relation in $S/(K+\<h\>)$.
\end{proof}

\section{Proof of Theorem \ref{thm:summaryTheorem-more-general}}
\label{sec:pf of Gorenstein}

We begin with the following result which holds for arbitrarily many variables:

\begin{proposition}
\label{prop:Gorenstein and extensions}
Let $S = k[x_1,\dots, x_n]$ and let $\m = (x_1,\dots, x_n)$. Suppose that $M$ is a finite-dimensional $S$-module with $\supp M = \m$ and $\dim \soc(M) = 1$. Then the following hold:
\begin{enumerate}
\item\label{item::Gorenstein inequality} $\dim S/\ann M\leq \dim M$
\item\label{item::ext of Gorenstein inequality} for every short exact sequence
\[
0\to M\to N\to S/\m\to 0,
\]
we have $\dim S/\ann N\leq \dim N$.
\end{enumerate}
\end{proposition}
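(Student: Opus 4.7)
The plan is to prove (\ref{item::Gorenstein inequality}) by Matlis duality, recognizing that $M$ must be the injective hull of $k$ over $R := S/\ann M$, and then to deduce (\ref{item::ext of Gorenstein inequality}) formally from (\ref{item::Gorenstein inequality}) together with the reformulation of Section \ref{sec:module-approach}.

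For (\ref{item::Gorenstein inequality}), first note that $R$ is Artinian local with residue field $k$, and that $M$ is a faithful $R$-module whose socle $\soc_R M = \soc_S M$ is one-dimensional. Since any $R$-module whose socle is one-dimensional embeds into the injective hull $E:=E_R(k)$, we have $M\hookrightarrow E$. Applying Matlis duality $D=\hom_R(-,E)$ turns this embedding into a surjection $R = D(E)\twoheadrightarrow D(M)$, so $D(M)\cong R/I$ for some ideal $I$. A second application of $D$, combined with the identity $\ann_R D(R/I) = I$ and faithfulness of $M$, forces $I=0$ and hence $M\cong E$. Since $D$ preserves $k$-dimension, one obtains $\dim M = \dim E = \dim R = \dim S/\ann M$, giving equality in (\ref{item::Gorenstein inequality}).

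For (\ref{item::ext of Gorenstein inequality}), I would invoke the classification of extensions $0\to M\to N\to S/\m\to 0$ by $S$-module maps $\beta\colon\m\to M$ used in the proof of Proposition \ref{prop:conj-reformulation-nvars}, which gives $\ann N = \ann M\cap\ker\beta$ and hence
\[
\dim S/\ann N = \dim S/\ann M + \dim\beta(\ann M).
\]
By (\ref{item::Gorenstein inequality}), the first summand equals $\dim M$, so it suffices to prove $\dim\beta(\ann M)\le 1$. This is immediate: for any $a\in\ann M$ and any generator $x_i$, the computation $x_i\beta(a) = \beta(x_ia) = a\beta(x_i) = 0$ shows $\beta(\ann M)\subseteq\soc M$, which is one-dimensional by hypothesis. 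The only substantive step is the Matlis duality identification in part (\ref{item::Gorenstein inequality}); no significant obstacle is anticipated.
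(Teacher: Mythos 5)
Your proof is correct, and part (\ref{item::Gorenstein inequality}) is argued by a genuinely different route than the paper's. The paper proves (\ref{item::Gorenstein inequality}) by induction on $\dim M$: refining a composition series to get $M_{r-1}\subseteq M$ with $M/M_{r-1}\simeq S/\m$, noting $\dim\soc(M_{r-1})=1$ (since $\soc(M_{r-1})\subseteq\soc(M)$ is nonzero), and then combining the inductive bound $\dim S/\ann M_{r-1}\leq\dim M_{r-1}$ with the observation $\beta(\ann M_{r-1})\subseteq\soc(M_{r-1})$ via Remark \ref{rmk:equivalence of 2 inequalities for m mapsto M} — i.e.\ the same $\beta\colon\m\to M$ machinery used throughout Sections \ref{sec:module-approach} and \ref{sec:main-thm}. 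Your Matlis-duality argument instead identifies $M$ outright as the injective hull $E_R(k)$ of $k$ over the Artinian local ring $R=S/\ann M$: the socle hypothesis gives the embedding $M\hookrightarrow E_R(k)$, dualizing yields $D(M)\cong R/I$, and faithfulness of $M$ over $R$ forces $I=0$, hence $M\cong E_R(k)$. This buys the sharper conclusion that \emph{equality} $\dim M=\dim S/\ann M$ always holds (via the standard identity that $E_R(k)$ and $R$ have the same length over an Artinian local ring), rather than merely the inequality, and is arguably the more conceptual proof — it makes visible that the hypothesis $\dim\soc M=1$ pins down $M$ uniquely as the Matlis dual of $R$. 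The trade-off is that the paper's route stays self-contained inside the module-morphism reformulation it has already built, making (\ref{item::Gorenstein inequality}) structurally parallel to (\ref{item::ext of Gorenstein inequality}) and to Theorem \ref{thm:main}, while yours invokes Matlis duality as external input. Your treatment of (\ref{item::ext of Gorenstein inequality}) is essentially identical to the paper's: both reduce to $\dim\beta(\ann M)\leq 1$, which is immediate from $\beta(\ann M)\subseteq\soc M$.
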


\begin{proof}
We first prove (\ref{item::Gorenstein inequality}) by induction on the dimension of $M$. If $\dim M = 1$ then $M\simeq S/\m$ and the result holds trivially. For higher dimensional $M$, recall that $M$ has a composition series
\[
0\subseteq M_1\subseteq \cdots \subseteq M_{r-1}\subseteq M
\]
where $M_i/M_{i-1}\simeq S/\m$. Since the socle of $M_{r-1}$ is contained in the socle of $M$, we also have that $\dim \soc(M_{r-1}) = 1$, and we may apply induction to see that $\dim S/\ann M_{r-1}\leq \dim M_{r-1}$. Then we have a short exact sequence
\[
0\to M_{r-1}\to M_r\to S/\m\to 0
\]
which corresponds to a map $\beta\colon\m\to M_{r-1}$. By Remark \ref{rmk:equivalence of 2 inequalities for m mapsto M}, we need only show
\[
\dim S/\ann M_{r-1}+\dim\beta(\ann M_{r-1})\leq\dim M_{r-1}+1.
\]
By induction, we know $\dim S/\ann M_{r-1}\leq\dim M_{r-1}$. Furthermore, $\beta(\ann M_{r-1})\subseteq \soc(M_{r-1})$, so has dimension at most 1. This proves the desired inequality.

The proof of (\ref{item::ext of Gorenstein inequality}) is entirely analogous. We know that the short exact sequence defining $N$ corresponds to a map $\beta\colon\m\to M$. By (\ref{item::Gorenstein inequality}), we know $\dim S/\ann(M)\leq\dim M$. Since $\beta(\ann M)\subseteq\soc(M)$, we have $\dim\beta(\ann M)\leq1$. Combining these two statements, inequality (\ref{eqn:Sm extended by M}) holds for $\beta$, and so $\dim S/\ann N\leq \dim N$ by Remark \ref{rmk:equivalence of 2 inequalities for m mapsto M}.
\end{proof}

We now turn to the proof of the second main result of this paper.

\begin{proof}[Proof of Theorem \ref{thm:summaryTheorem-more-general}]
By Lemma \ref{l:S/ann<=dim-direct-sums}, we reduce immediately to the case where: (i) $N$ is either cyclic, or (ii) $N$ is local Gorenstein, or (iii) there is an extension
\begin{equation}
\label{eqn:M-N-Sm-extension}
0\to M\to N\to S/\m\to 0
\end{equation}
where $\m\subseteq S$ is a maximal ideal and 
$M=\bigoplus_\ell M_{\ell}$ with each $M_{\ell}$ a cyclic or local Gorenstein module. For case (i), inequality (\ref{eqn:dimM}) holds trivially. Case (ii) is handled by Proposition \ref{prop:Gorenstein and extensions} (\ref{item::Gorenstein inequality}).

It remains to handle case (iii). Further decomposing if necessary, we can assume each $M_\ell$ is local. Next, letting $L$ be the set of $\ell$ for which $\supp(M_\ell)=\m$, we can write $N=N'\oplus\bigoplus_{\ell\notin L}M_\ell$ where we have a short exact sequence
\[
0\to \bigoplus_{\ell\in L}M_\ell\to N'\to S/\m\to 0.
\]
Since $M_\ell$ is cyclic or local Gorenstein for every $\ell\notin L$, another application of Lemma \ref{l:S/ann<=dim-direct-sums} combined with cases (i) and (ii) above allows us to assume $N=N'$, i.e.~we can assume that $\supp(M_\ell)=\m$ for all $\ell$. Proposition \ref{prop:direct-sums} then reduces us to the case where there is only one $\ell$; that is, we need only consider extensions (\ref{eqn:M-N-Sm-extension}) where $\sqrt{\ann M}=\m$ and $M$ itself is cyclic or Gorenstein. If $M$ is Gorenstein, then inequality (\ref{eqn:dimM}) holds by Proposition \ref{prop:Gorenstein and extensions} (\ref{item::ext of Gorenstein inequality}). If $M$ is cyclic, then the inequality holds by Theorem \ref{thm:summaryTheorem}.
\end{proof}





\section{Some other instances where Statement \ref{conj:triple-mat} holds}\label{sect:otherInstances}

In this short section we record a couple of additional situations where a finite-dimensional module $M$ over $S=k[x_1,\dots, x_n]$ satisfies $\dim S/\ann M\leq \dim M$.

\begin{proposition}
If $\beta:\m\rightarrow M$ is surjective, then the pair $(\beta, M)$ is not a counterexample. Moreover, if $N$ is the extension of $M$ defined by $\beta$ then 
\[
\dim S/\ann(N) = \dim N.
\]
\end{proposition}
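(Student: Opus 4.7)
The plan is to show that surjectivity of $\beta$ is exactly the condition forcing the extension $N$ to be cyclic as an $S$-module; once this is established, both the non-counterexample claim and the equality $\dim S/\ann(N) = \dim N$ follow immediately.

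First I would pick any lift $e \in N$ of $1 \in S/\m$, so that as a $k$-vector space $N = M \oplus ke$. By the construction of $N$ from the extension class represented by $\beta$ (as reviewed in the proof of Proposition \ref{prop:conj-reformulation-nvars}), for every $r \in \m$ the element $re \in N$ lies in $M$ and equals $\beta(r)$. Consequently the cyclic submodule $Se \subseteq N$ equals $ke + \beta(\m)$. Using the hypothesis that $\beta$ is surjective, i.e.~$\beta(\m) = M$, this gives $Se = ke + M = N$, so $N$ is a cyclic $S$-module generated by $e$.

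Any cyclic module is isomorphic to $S$ modulo the annihilator of a generator, so $N \simeq S/\ann(N)$, and therefore
\[
\dim S/\ann(N) = \dim N,
\]
which is the second assertion of the proposition. To deduce that $(\beta, M)$ is not a counterexample, I would invoke Remark \ref{rmk:equivalence of 2 inequalities for m mapsto M}, which states that the counterexample inequality \eqref{eqn:Sm extended by M} for $(\beta, M)$ holds if and only if the inequality $\dim S/\ann(N) \leq \dim N$ holds for the associated extension $N$; since we have established equality, the pair $(\beta, M)$ is not a counterexample in the sense of Definition \ref{def:beta-M-counter-ex}.

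There is essentially no obstacle here: the entire content of the argument is the elementary observation that surjectivity of $\beta$ is exactly what makes a lift of $1 \in S/\m$ into a cyclic generator of $N$. The only small point to verify carefully is that $r \mapsto re$ for $r \in \m$ really does recover $\beta$ under the identification of extensions with classes in $\ext^1(S/\m, M)$, and this is already implicit in the long exact sequence argument of the proof of Proposition \ref{prop:conj-reformulation-nvars}.
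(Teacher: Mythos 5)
Your proof is correct, and it takes a genuinely different and arguably more conceptual route than the paper's. The paper argues by a direct dimension count: using $\ann(N)=\ann(M)\cap\ker\beta$, it expands $\dim S/\ann(N)$ along the chain $\ann(N)\cap\ker\beta\subseteq\ker\beta\subseteq\m\subseteq S$, reduces the equality to the containment $\ker\beta\subseteq\ann(M)$, and then proves that containment by the one-line trick $fm = f\beta(g)=g\beta(f)=0$ for $f\in\ker\beta$, $m=\beta(g)$. You instead identify the structural reason the equality holds: surjectivity of $\beta$ is precisely the condition making any lift $e$ of $1\in S/\m$ a cyclic generator of $N$, and for a cyclic module the annihilator of a generator equals the annihilator of the module, whence $N\simeq S/\ann(N)$. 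Your argument makes visible \emph{why} the proposition is true (the extension collapses to the already-understood cyclic case), whereas the paper's argument stays within the computational toolkit it uses elsewhere. One small point you flagged yourself and should make explicit if this were written up: the identification $re=\beta(r)$ (up to a sign and up to modifying $e$ by an element of $M$, neither of which affects the conclusion that $Se=ke+\beta(\m)$) does need to be drawn out, since the proof of Proposition \ref{prop:conj-reformulation-nvars} constructs $\beta$ from $N$ rather than the other way around; but this is standard and harmless.
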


\begin{proof}
Since $\ann(N)=\ann(M)\cap\ker(\beta)$, we see
\[
\begin{split}
\dim S/\ann(N)&=\dim S/\m+\dim \m/\ker(\beta)+\dim\ker(\beta)/(\ann(N)\cap\ker(\beta))\\
&=1+\dim M+\dim\ker(\beta)/(\ann(N)\cap\ker(\beta)).
\end{split}
\]
Since $\dim N=1+\dim M$, we must show $\ker(\beta)\subseteq \ann(M)$. Given $m\in M$ and $f\in \ker(\beta)$, we know $\beta$ is surjective, so $m = \beta(g)$ for some $g\in\m$. Then $fm =f\beta(g) = g\beta(f) =0$, and so $f\in\ann(M)$.
\end{proof}

\begin{proposition}\label{prop:ann(m)=annM}
If there exists $m\in N$ such that $\ann(m) = \ann(N)$, then \[\dim S/\ann N\leq \dim N.\]
\end{proposition}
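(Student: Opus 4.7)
The plan is almost immediate: exhibit $S/\ann(N)$ as a submodule of $N$, so that its $k$-dimension is bounded by $\dim N$.

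More precisely, given the element $m \in N$ with $\ann(m) = \ann(N)$, I would consider the cyclic $S$-submodule $Sm \subseteq N$ generated by $m$. The $S$-module map $S \to N$ defined by $1 \mapsto m$ has kernel $\ann(m)$, so it induces an injection
\[
S/\ann(m) \hookrightarrow N.
\]
By hypothesis $\ann(m) = \ann(N)$, so this is an injection $S/\ann(N) \hookrightarrow N$ of $k$-vector spaces. Taking $k$-dimensions yields $\dim S/\ann(N) \leq \dim N$, which is the desired inequality \eqref{eqn:dimM}.

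The proof is a one-liner and has no main obstacle; the content is entirely in the hypothesis that some single element $m$ realizes the full annihilator of $N$, which forces $N$ to contain a cyclic submodule isomorphic to $S/\ann(N)$. It is worth noting that this hypothesis is automatically satisfied, for example, whenever $N$ is cyclic (recovering the trivial case), or more generally whenever $N$ has a faithful module over $S/\ann(N)$ containing a free $S/\ann(N)$-summand generated by a single element.
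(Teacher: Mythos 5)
Your proof is correct and is essentially identical to the paper's: both use the $S$-module map $S \to N$, $1 \mapsto m$, whose kernel is $\ann(m)=\ann(N)$, to embed $S/\ann(N)$ as the cyclic submodule $Sm\subseteq N$ and then compare dimensions. No gap and no meaningful difference in approach.
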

\begin{proof}
In this case, $\dim S/\ann(N)=\dim S/\ann(m)$ and since $S/\ann(m)$ is isomorphic to the cyclic submodule $Sm\subseteq N$, we necessarily have $\dim S/\ann(m)=\dim Sm\leq \dim N$.
\end{proof}

We end this section with an example where Theorem \ref{thm:summaryTheorem} applies but Proposition \ref{prop:ann(m)=annM} does not.
%


\begin{example}
Let $I = (x,y^2,z)\subseteq k[x,y,z]$. %
Let $N = (xy, z)/(yz, x^2, z^2, xy^2-xz)$ and observe that $N$ fits into a short exact sequence
\[
0\rightarrow S/I \rightarrow N \rightarrow S/\m \rightarrow 0
\]
where the injective map sends $1$ to $xy$. We know by Theorem \ref{thm:summaryTheorem} that inequality (\ref{eqn:dimM}) holds for $N$. Proposition \ref{prop:ann(m)=annM}, however does not apply here: every element of $N$ can be represented as $m=az+bxy+cxz$, for some $a,b,c\in k$, and one checks that there is no choice of $a,b,c\in k$ such that $\ann(m)$ agrees with $\ann N= (z,y^2,xy,x^2)$. Indeed, if $a\neq 0$, then $y-(b/a)x\in \ann(m)$ and if $a = 0$, then $x\in \ann(m)$.
\end{example}

\section{An inductive approach to Statement \ref{conj:triple-mat}}
\label{sec:inductive approach}
Our goal is to prove Proposition \ref{prop:conjecture inductive approach} stated in the introduction. We do so after a preliminary lemma.

\begin{lemma}
\label{l:extension of M by SJ}
Let $S=k[x_1,\dots,x_n]$ and $\m=(x_1,\dots,x_n)$. Then Statement \ref{statement:n-matrix-conj} is true if and only if for all ideals $J\subseteq \m$, all finite-dimensional $S$-modules $M$ with $\sqrt{\ann M}=\m$, and all $S$-module morphisms $\beta\colon J\to M$, we have
\begin{equation}\label{eqn:extension of SJ by M}
\dim J/(J\cap\ann M)+\dim\beta(J\cap\ann M)\leq \dim M.
\end{equation}
\end{lemma}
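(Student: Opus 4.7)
The plan is to interpret inequality \eqref{eqn:extension of SJ by M} as the assertion $\dim S/\ann N\leq\dim N$ for the pushout extension $N=(S\oplus M)/\{(j,-\beta(j)):j\in J\}$ defined by $\beta$, which fits into a short exact sequence $0\to M\to N\to S/J\to 0$ constructed analogously to the one in the proof of Proposition \ref{prop:conj-reformulation-nvars}. A direct calculation shows $\ann(N)=\ann(M)\cap\ker\beta$, where $\ker\beta\subseteq J\subseteq S$. Chaining the two short exact sequences
\[
0\to J/(J\cap\ann M)\to S/(J\cap\ann M)\to S/J\to 0
\]
and
\[
0\to (J\cap\ann M)/\ann N\to S/\ann N\to S/(J\cap\ann M)\to 0,
\]
together with the identification $(J\cap\ann M)/\ann N\simeq\beta(J\cap\ann M)$, yields
\[
\dim S/\ann N=\dim S/J+\dim J/(J\cap\ann M)+\dim\beta(J\cap\ann M),
\]
while the extension gives $\dim N=\dim S/J+\dim M$. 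When $J$ has finite colength, this shows inequality \eqref{eqn:extension of SJ by M} is equivalent to $\dim S/\ann N\leq\dim N$.

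For the backward direction, I would specialize to $J=\m$. Since $\ann M\subseteq\m$, we have $\m\cap\ann M=\ann M$ and $\dim\m/\ann M=\dim S/\ann M-1$, so inequality \eqref{eqn:extension of SJ by M} rearranges to $\dim S/\ann M+\dim\beta(\ann M)\leq\dim M+1$, which is exactly inequality \eqref{eqn:Sm extended by M}. Applying Proposition \ref{prop:conj-reformulation-nvars} then yields Statement \ref{statement:n-matrix-conj}.

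For the forward direction, assume Statement \ref{statement:n-matrix-conj}. If $J$ has finite colength, then $N$ is finite-dimensional, Statement \ref{statement:n-matrix-conj} gives $\dim S/\ann N\leq\dim N$, and the equivalence above yields inequality \eqref{eqn:extension of SJ by M}. For arbitrary $J\subseteq\m$, I would reduce to the finite-colength case: choose $k$ with $\m^k M=0$, and use the Artin--Rees lemma to pick $c$ with $J\cap\m^n\subseteq\m^{n-c}J$ for all $n\geq c$. For $r\geq c+k$ we have $\beta(J\cap\m^r)\subseteq\m^k\beta(J)\subseteq\m^k M=0$, so setting $J':=J+\m^r$ (which has finite colength) we can extend $\beta$ to an $S$-linear map $\beta'\colon J'\to M$ with $\beta'|_{\m^r}=0$. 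A direct check using $\m^r\subseteq\ann M$ shows $J'\cap\ann M=(J\cap\ann M)+\m^r$, whence $J'/(J'\cap\ann M)\simeq J/(J\cap\ann M)$ and $\beta'(J'\cap\ann M)=\beta(J\cap\ann M)$; hence inequality \eqref{eqn:extension of SJ by M} for $(J',M,\beta')$ is identical to that for $(J,M,\beta)$, reducing to the finite-colength case. The main obstacle is this Artin--Rees reduction, since the direct identification of inequality \eqref{eqn:extension of SJ by M} with $\dim S/\ann N\leq\dim N$ breaks down when $\dim S/J=\infty$, making the extension $N$ infinite-dimensional and placing it outside the scope of Statement \ref{statement:n-matrix-conj}.
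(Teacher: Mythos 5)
Your proof is correct, and its overall architecture matches the paper's: both directions go through the extension $N$ corresponding to $\beta\colon J\to M$, the computation $\ann(N)=\ann(M)\cap\ker\beta$, and the dimension count that converts $\dim S/\ann N\leq\dim N$ into inequality \eqref{eqn:extension of SJ by M}. The backward direction ($J=\m$ plus Proposition \ref{prop:conj-reformulation-nvars}) is identical to the paper's.

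Where you genuinely add something is in the forward direction. The paper passes directly from Statement \ref{statement:n-matrix-conj} to the inequality $\dim S/\ann N\leq\dim N$ via Proposition \ref{prop:S/ann<=dim} and then subtracts $\dim S/J$ from both sides. As you correctly point out, this only makes literal sense when $S/J$ is finite-dimensional; for an arbitrary ideal $J\subseteq\m$ (which the lemma allows — e.g.\ $J$ a principal ideal, or even $J=0$), the module $N$ is infinite-dimensional, so Proposition \ref{prop:S/ann<=dim} does not apply to it, and ``subtracting $\dim S/J$'' becomes $\infty-\infty$. The paper's proof is silent on this. Your Artin--Rees reduction fills the gap cleanly: choosing $r\geq c+k$ so that $\beta(J\cap\m^r)\subseteq\m^{r-c}\beta(J)\subseteq\m^k M=0$, extending $\beta$ by zero to $J'=J+\m^r$, and checking $J'\cap\ann M=(J\cap\ann M)+\m^r$, $J'/(J'\cap\ann M)\simeq J/(J\cap\ann M)$, and $\beta'(J'\cap\ann M)=\beta(J\cap\ann M)$ are all straightforward verifications (I checked each), and they reduce the claim to the finite-colength case where the paper's dimension count is valid. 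So your proposal is not merely a reproof — it repairs a small but real lacuna in the published argument, at the cost of one invocation of Artin--Rees.
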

\begin{proof}
Notice that if $J=\m$, then $J\cap\ann M=\ann M$, and so the inequalities (\ref{eqn:Sm extended by M}) and (\ref{eqn:extension of SJ by M}) are equivalent. So by Proposition \ref{prop:conj-reformulation-nvars}, the inequality (\ref{eqn:extension of SJ by M}) implies Statement \ref{statement:n-matrix-conj}.

Thus, it remains to show that Statement \ref{statement:n-matrix-conj} implies inequality (\ref{eqn:extension of SJ by M}). To see this, fix $J\subseteq\m$ and an $S$-module map $\beta\colon J\to M$. As in the proof of Proposition \ref{prop:conj-reformulation-nvars}, the map $\beta$ defines an extension
\[
0\to M\to N\to S/J\to 0
\]
with $\ann(N)=\ann(M)\cap \ker\beta$. By Proposition \ref{prop:S/ann<=dim}, we know Statement \ref{statement:n-matrix-conj} implies inequality (\ref{eqn:dimM}) for $N$. Now note that
\[
\begin{split}
\dim S/\ann(N) & =\dim S/(J\cap\ann M)+\dim(J\cap\ann M)/(\ker\beta\cap\ann M) \\ 
& =\dim S/(J\cap\ann M)+\dim\beta(J\cap\ann M).
\end{split}
\]
Since $\dim N=\dim M+\dim S/J$, we obtain inequality (\ref{eqn:extension of SJ by M}) by subtracting $\dim S/J$ from both sides of the inequality (\ref{eqn:dimM}).
\end{proof}

\begin{remark}
\label{rmk:equivalence of 2 inequalities}
The proof of Lemma \ref{l:extension of M by SJ} shows that if
\[
0\to M\to N\to S/J\to 0
\]
is the extension corresponding to the map $\beta\colon J\to M$, then the inequality (\ref{eqn:extension of SJ by M}) holds if and only if the inequality (\ref{eqn:dimM}) holds for $N$.
\end{remark}

\begin{proof}[Proof of Proposition \ref{prop:conjecture inductive approach}]
Let $S=k[x,y,z]$ be a polynomial ring and $\m=(x,y,z)$. We know by Lemma \ref{l:extension of M by SJ} that Statement \ref{conj:triple-mat} is true if and only if inequality (\ref{eqn:extension of SJ by M}) holds for all $J$, $M$, and maps $\beta\colon J\to M$. 
So, if Statement \ref{conj:triple-mat} is true, then both of the inequalities in the statement of Proposition \ref{prop:conjecture inductive approach} are true, hence the first inequality implies the second.

We now show that if the implication of inequalities in the statement of Proposition \ref{prop:conjecture inductive approach} holds, then Statement \ref{conj:triple-mat} is true. 
By virtue of Lemma \ref{l:extension of M by SJ}, we need only show that inequality (\ref{eqn:extension of SJ by M}) holds for all $J$, $M$, and $\beta\colon J\to M$. We prove this latter statement by induction on $\dim M$, the base case being trivial. So, we need only handle the induction step. For this, we can choose a submodule $M'\subseteq M$ such that $\dim(M/M')=1$. Then $M/M'\simeq S/\m$ and we let $\pi\colon M\to S/\m$ be the quotient map.

Suppose first that $\pi\beta\colon J\to S/\m$ is surjective. Then letting $I=\ker(\pi\beta)$, we have a map of short exact sequences
\[
\xymatrix{
0 \ar[r] & I\ar[r]\ar[d]^-{\alpha} & J\ar[r]\ar[d]^-{\beta} & S/\m\ar[r]\ar[d]^-{\simeq} & 0\\
0 \ar[r] & M'\ar[r] & M\ar[r]^-{\pi} & S/\m\ar[r] & 0
}
\]
The maps $\alpha$ and $\beta$ define extensions
\[
0\to M'\to N'\to S/I\to 0\quad\textrm{and\ }\quad 0\to M\to N\to S/J\to 0,
\]
respectively, and one checks that $N'\simeq N$. Since $\dim M'<\dim M$, we can assume by induction that 
\[
\dim I/(I\cap\ann M')+\dim\alpha(I\cap\ann M')\leq \dim M'.
\]
By Remark \ref{rmk:equivalence of 2 inequalities}, this is equivalent to the inequality $\dim S/\ann(N')\leq\dim N'$. Since $N\simeq N'$ we obtain the inequality $\dim S/\ann(N)\leq\dim N$, and applying Remark \ref{rmk:equivalence of 2 inequalities} again, we have
\[
\dim J/(J\cap\ann M)+\dim\beta(J\cap\ann M)\leq \dim M.
\]

It remains to handle the case when $\pi\beta\colon J\to S/\m$ is not surjective. In this case $\pi\beta=0$ and so $\beta$ factors through $M'$. We are thus in the situation $\beta\colon J\to M'\subseteq M$. By induction, we can assume 
\[
\dim J/(J\cap\ann M')+\dim\beta(J\cap\ann M')\leq \dim M'
\]
and we want to show 
\[
\dim J/(J\cap\ann M)+\dim\beta(J\cap\ann M)\leq \dim M.
\]
This is precisely the implication of inequalities in the hypothesis of Proposition \ref{prop:conjecture inductive approach}.
\end{proof}

\bibliographystyle{alpha}
\bibliography{Gerstenhaber}

%
%
%
%
%
%
%
%
%
%
%


\end{document}